\newtheorem{teo}{Theorem}[section]
\newtheorem{coro}[teo]{Corollary}
\newtheorem{lema}[teo]{Lemma}
\newtheorem{prop}[teo]{Proposition}
\newtheorem{defi}[teo]{Definition}
\newtheorem{prob}[teo]{Problem}
\numberwithin{equation}{section}
\newcommand{\To}{\longrightarrow}
\title{Free sets for set-mappings relative to a family of sets}
\author{Antonio Avil\'{e}s}
\address{Universidad de Murcia, Departamento de Matem\'{a}ticas, Campus de Espinardo 30100 Murcia, Spain.} \email{avileslo@um.es}
\author{Claribet Pi\~{n}a}
\address{Universidad de los Andes, Departamento de Matem\'{a}aticas 1117111
Bogot\'{a}, Colombia.} \email{c.pina@uniandes.edu.co}
\thanks{First author supported by MINECO and FEDER (MTM2014-54182-P) and by Fundaci\'{o}n S\'{e}neca - Regi\'{o}n de Murcia (19275/PI/14).  Second author supported by Fondation Sciences Math\'{e}matiques de Paris.}
\subjclass[2010]{}
\keywords{}
\begin{document}

\maketitle
\hyphenation{li-ne-ar}

\begin{abstract}
Given a family $\mathcal{F}$ of subsets of $\{1,\ldots,m\}$, we try to compute the least natural number $n$ such that for every function $S:[\aleph_n]^{<\omega}\To [\aleph_n]^{<\omega}$ there exists a bijection $u:\{1,\ldots,m\}\To Y\subset \aleph_n$ such that $Su(A)\cap Y \subset u(A)$ for all $A\in\mathcal{F}$.\\
\end{abstract}

\emph{Added in proof:  The answer to Problem 1 in page 2 is YES under GCH, but NO in other models for $m>3$ (cf. [5]). Thus, Problem 3.6 should be considered under GCH.}

\section{Introduction}

For a set $X$, $[X]^{<\omega}$ denotes the family of all finite subsets of $X$, and, for a cardinal $n$, $[X]^n$ denotes the family of all subsets of $X$ of cardinality $n$. The following is a variant of a classical result of Kuratowski and Sierpi\'{n}ski \cite{Kuratowski}:

\begin{teo}\label{classical}
For a set $X$ and a natural number $n$ the following are equivalent:
\begin{enumerate}
\item $|X|\leq \aleph_{n-1}$,
\item For every function $S:[X]^{<\omega}\To [X]^{<\omega}$ there exists a set $Y\in [X]^n$ such that $S(A)\cap Y \subset A$ for all $A\subset Y$.
\end{enumerate}
\end{teo}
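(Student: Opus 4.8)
The plan is to prove both implications by induction on $n$, establishing $(1)\Rightarrow(2)$ directly and $(2)\Rightarrow(1)$ in its contrapositive form $\neg(1)\Rightarrow\neg(2)$. By monotonicity in $|X|$ it suffices to treat the two extremal cases: for $(1)\Rightarrow(2)$ I may assume $|X|=\aleph_{n-1}$, and for $\neg(1)\Rightarrow\neg(2)$ I may assume $|X|=\aleph_n$. The engine of the induction is a well-ordering of $X$ of order type $\omega_{n-1}$ (resp. $\omega_n$), chosen so that every proper initial segment has cardinality at most $\aleph_{n-2}$ (resp. $\aleph_{n-1}$); writing $P_x$ for the set of predecessors of $x$, the inductive hypothesis is applied on the $P_x$. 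The one structural fact used throughout is that freeness is hereditary: if $Y$ is free for $S$ and $Y'\subseteq Y$, then $Y'$ is free for $S$, because for every $A\subseteq Y'$ one has $S(A)\cap Y'=S(A)\cap Y\cap Y'\subseteq A\cap Y'=A$. The base case $n=1$ is handled by hand.

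For $(1)\Rightarrow(2)$, given $S$ on $X$ with $|X|=\aleph_{n-1}$, I build a free $n$-set by peeling off its largest element. For each $x$ I define an induced set mapping $S_x:[P_x]^{<\omega}\To[P_x]^{<\omega}$ that simultaneously records $S(A)$ and $S(A\cup\{x\})$ on $P_x$, say $S_x(A)=(S(A)\cup S(A\cup\{x\}))\cap P_x$. Since $|P_x|\leq\aleph_{n-2}$, the inductive hypothesis yields a set $Y'_x\subseteq P_x$ free for $S_x$; unwinding the definition, every $A\subseteq Y'_x$ then satisfies both $S(A)\cap Y'_x\subseteq A$ and $S(A\cup\{x\})\cap Y'_x\subseteq A$. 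The only condition separating $Y'_x\cup\{x\}$ from being free for $S$ is that $x\notin S(A)$ for all $A\subseteq Y'_x$. I then select the top element $x$ so that this last requirement also holds: ranging over the $\aleph_{n-1}$ candidates $x$, a set-mapping / pressing-down argument applied to $x\mapsto\bigcup_{A\subseteq Y'_x}S(A)$ produces an $x$ lying outside its own associated set, whence $Y=Y'_x\cup\{x\}$ is free for $S$.

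For $\neg(1)\Rightarrow\neg(2)$, let $|X|=\aleph_n$ and fix a well-ordering of type $\omega_n$. I define $S$ so that every $n$-set is blocked at its top element: for each $x$ the inductive hypothesis applied to $P_x$ (of size $\aleph_{n-1}$) supplies a mapping with no free $(n-1)$-set, and I splice these together along the ordering, using the values $S(A)$ with $A\subseteq P_x$ to destroy the lower $(n-1)$-part of any candidate and the values on sets with top element $x$ to destroy its upward extension across $x$. A direct verification then shows that no $n$-element $Y$ is free: writing $x=\max Y$, either the restriction to $Y\cap P_x$ already fails freeness for the spliced lower mapping, or the extension of $Y\cap P_x$ by $x$ violates a condition built in at stage $x$.

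The main obstacle is twofold and sits in the two gluing steps. In $(1)\Rightarrow(2)$ the delicate point is the choice of the top element: the set $Y'_x$ depends on $x$ through both $S_x$ and $P_x$, so the avoidance condition cannot be arranged pointwise and must instead be extracted from the $\aleph_{n-1}$ candidates by a regressive-function (Fodor-type) or cardinality argument — this is precisely where the value $\aleph_{n-1}$ enters. In $\neg(1)\Rightarrow\neg(2)$ the difficulty is to make the inductively supplied lower-dimensional blocking mappings cohere into a single $S$ that blocks all $n$-sets simultaneously, which forces careful bookkeeping along the well-ordering of type $\omega_n$. I also expect the base case $n=1$ to need separate, direct treatment rather than falling out of the recursion.
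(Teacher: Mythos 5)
Your two halves contradict each other, so at least one of them must be wrong, and it is the negative one. As you yourself note, freeness passes to subsets, and by the same restriction trick condition (2) is \emph{upward} monotone in $X$: if every $S$ on $X$ has a free $n$-set and $X\subseteq X'$, then, given $S'$ on $X'$, apply the hypothesis to $A\mapsto S'(A)\cap X$ and observe that a free set for this restricted mapping is free for $S'$. Hence your first half (every $S$ on a set of size $\aleph_{n-1}$ has a free $n$-set) already implies that every $S$ on a set of size $\aleph_n$ has a free $n$-set, which is exactly what your second half claims to refute. And the second half is indeed unprovable: in the base case $n=1$ it asserts that on an uncountable $X$ there is $S$ with no free singleton, i.e.\ with $S(\emptyset)\supseteq X$, impossible since $S(\emptyset)$ is finite; likewise the inductive hypothesis you invoke at stage $n$ (a mapping on $P_x$, $|P_x|=\aleph_{n-1}$, with no free $(n-1)$-set) is false. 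The root of the trouble is that you took the printed statement at face value. Since (2) is upward monotone in $X$, it can only be equivalent to a \emph{lower} bound on $|X|$; the correct classical statement is that (2) holds if and only if $|X|\geq \aleph_{n-1}$, and the inequality in (1) is a misprint. This is forced by the paper's own later claims (well-definedness of $\mathfrak{fr}$, the assertion $\mathfrak{fr}(\wp(\{1,\ldots,n\}))=n-1$, Proposition~\ref{classicalformula}), and it is visible in a two-line counterexample: for $n=2$, $X=\omega$, $S(A)=\{0,1,\ldots,\max A\}$, condition (1) holds but no pair $\{y_1<y_2\}$ is free, since $y_1\in S(\{y_2\})$. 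Note also that your ``monotonicity'' reduction runs the wrong way even formally: proving (2) at $|X|=\aleph_{n-1}$ says nothing about smaller $X$.

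Your ingredients are, however, the right ones for the corrected statement, just located at the wrong cardinals. The positive half on $\aleph_{n-1}$ is the classical Kuratowski--Sierpi\'{n}ski induction; its delicate gluing step cannot be a bare cardinality count (the fibers $\{x: x\in S(A_x)\}$ are finite but there are $\aleph_{n-1}$ possible values $A_x$, so no contradiction arises that way), but it does work if you press down correctly: on the hypothesis that $x\in\bigcup_{A\subseteq Y'_x}S(A)$ for all $x$, restrict to ordinals of uncountable cofinality, apply Fodor to $x\mapsto\sup A_x$ where $A_x\subseteq P_x$ is a witness, and then count $\bigl|\bigcup\{S(A): A\subseteq\gamma \text{ finite}\}\bigr|\leq\aleph_{n-2}<\aleph_{n-1}$ to reach a contradiction; alternatively, normalize $S$ first as in Lemma~\ref{specialS} so that $\alpha\notin S(A)$ whenever $\max(A)<\alpha$, after which the gluing condition is automatic. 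Your splicing construction is precisely the proof of the true negative direction, $|X|\leq\aleph_{n-2}\Rightarrow\neg(2)$: well-order $X$ in type at most $\omega_{n-2}$, so each $P_x$ has size at most $\aleph_{n-3}$, let $S_x$ block $(n-1)$-sets in $P_x$ by induction, and set $S(B)=S_{\max B}(B\setminus\{\max B\})$; if $Y$ is an $n$-set with top element $x$, a set $A\subseteq Y\setminus\{x\}$ witnessing non-freeness for $S_x$ yields $B=A\cup\{x\}$ witnessing non-freeness of $Y$ for $S$ (for $n=2$ this is exactly the mapping $S(A)=\{0,\ldots,\max A\}$ above). Finally, for comparison: the paper never proves this theorem directly (it cites Kuratowski), but it re-derives both directions with entirely different machinery --- the positive direction from Theorem~\ref{constantorders}, writing every subset of an $n$-point set as an intersection of initial segments of $n$ linear orders, and the negative direction from the nested-orders count (Theorem~\ref{reorderings}, Proposition~\ref{noall}, Corollary~\ref{nolessfr}) --- assembled in Proposition~\ref{classicalformula}.
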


In this context, the set $Y$ obtained is said to be a free set.
This combinatorial principle has some applications in the theory
of nonseparable Banach spaces \cite{EnfRos} concerning bases in
spaces of integrable functions. For another application in Banach
space theory, this time about extension operators, a similar
principle was used in \cite{AntonioWitold2015} in which $X$ can be
of cardinality $\aleph_1$, $Y$ can be found of arbitrarily large
finite cardinality, but the property does not hold for all but
only for some subsets $A\subset Y$.

\begin{teo}\label{AviMar}
Let $X$ be an uncountable set, $m$ a natural number, and
$S:[X]^{<\omega}\To [X]^{<\omega}$ a function. Then, there exists
$Y\subset X$ and a bijection $u:\{1,\ldots,m\}\To Y$ such that
$Su(A)\cap Y \subset u(A)$ whenever $A$ is of the form $A = \{k :
i\leq k \leq j\}$ for some $i,j\in \{1,\ldots,m\}$.
\end{teo}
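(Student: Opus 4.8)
The plan is to deduce the statement directly from the classical free set theorem (Theorem~\ref{classical}) by passing to a sufficiently small subset of $X$. The point is that the interval condition is a special case of the \emph{full} free-set property $S(A)\cap Y\subseteq A$ (for all $A\subseteq Y$), and that such a free set for the restriction of $S$ to a subset is automatically free for $S$ itself. Since an uncountable $X$ contains a subset of size $\aleph_1$, and $\aleph_1\le\aleph_{m-1}$ for every $m\ge 2$, Theorem~\ref{classical} will hand us a full free set of size $m$, and an interval-free set follows for free.

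Concretely, I would first dispose of $m\le 1$, which is trivial. For $m\ge 2$, fix $X'\subseteq X$ with $|X'|=\aleph_1$ (possible since $X$ is uncountable) and define $S':[X']^{<\omega}\To[X']^{<\omega}$ by $S'(A)=S(A)\cap X'$; this is well defined because $S(A)$ is finite. Applying the implication $(1)\Rightarrow(2)$ of Theorem~\ref{classical} to $X'$ and $S'$ with $n=m$ — legitimate because $|X'|=\aleph_1\le\aleph_{m-1}$ — yields a set $Y\in[X']^m$ with $S'(A)\cap Y\subseteq A$ for every $A\subseteq Y$.

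It then remains to transfer this back to $S$ and read off the interval condition. Since $Y\subseteq X'$ we have $S(A)\cap Y=S(A)\cap X'\cap Y=S'(A)\cap Y\subseteq A$ for all $A\subseteq Y$, so $Y$ is free for $S$ in the strong sense of Theorem~\ref{classical}. Choosing \emph{any} bijection $u:\{1,\ldots,m\}\To Y$ and applying the last inclusion to the particular subset $u(A)\subseteq Y$, where $A=\{k:i\le k\le j\}$, gives $Su(A)\cap Y\subseteq u(A)$ for every interval, which is exactly the desired conclusion.

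The only genuine content lies in Theorem~\ref{classical}; the reduction above is where one should pause, and it also explains why a naive direct construction is the wrong instinct here. If one tried to build $y_1,\ldots,y_m$ one point at a time, the step of adjoining a new rightmost point $y_t$ would require $y_k\notin S(\{y_i,\ldots,y_t\})$ for all $k<i$, and the set of ``bad'' choices $z$ (those with $y_k\in S(\{y_i,\ldots,y_{t-1},z\})$) can be all of $X$, so the finiteness of the values of $S$ gives no leverage against this coupling. Passing instead to a subset of size $\le\aleph_{m-1}$ is precisely the device that dissolves the difficulty, and $\aleph_1$ always qualifies; this is where the uncountability of $X$ enters, merely to guarantee that a subset of size $\aleph_1$ is available.
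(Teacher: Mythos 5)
There is a fatal gap: your entire reduction rests on reading Theorem~\ref{classical} with the inequality in the wrong direction. The classical Kuratowski--Sierpi\'{n}ski phenomenon is that a \emph{fully} free set of size $n$ (free for all $A\subset Y$) is guaranteed for every set-mapping exactly when $|X|\geq\aleph_{n-1}$, not $\leq$: making $X$ smaller gives you \emph{more} counterexamples, not fewer. Indeed, on $X=\omega$ the map $S(A)=\{0,1,\ldots,\max(A)\}$ admits no fully free pair, even though $\aleph_0\leq\aleph_{n-1}$; and the paper itself asserts (Proposition~\ref{classicalformula}, and item (1) of the restatement in the introduction) that $\mathfrak{fr}([X]^{<\omega})=|X|-1$, i.e.\ that on $\aleph_1$ there exist set-mappings with no fully free set of size $m$ for any $m\geq 3$. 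Your argument, if valid, would prove $\mathfrak{fr}([\{1,\ldots,m\}]^{<\omega})\leq 1$ for all $m$, contradicting this. The statement of Theorem~\ref{classical} in the paper is evidently a typo for $|X|\geq\aleph_{n-1}$, and you cannot invoke the false direction. Concretely, the step that fails is ``applying $(1)\Rightarrow(2)$ to $X'$ with $|X'|=\aleph_1$ and $n=m$'': passing to a subset of size $\aleph_1$ is precisely the move that destroys the full free-set property for $m\geq 3$, since $\aleph_1<\aleph_{m-1}$. (Your transfer step $S'(A)=S(A)\cap X'$ and the deduction of the interval condition from full freeness are both fine, but they transfer a conclusion you cannot obtain.)

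The content of Theorem~\ref{AviMar} is exactly that the interval structure lets you get by with $\aleph_1$ where full freeness would demand $\aleph_{m-1}$, so any correct proof must exploit that an interval is the intersection of one initial segment of the usual order with one initial segment of the reverse order. That is what the paper does: Theorem~\ref{AviMar} is obtained as the case $n=1$ of Theorem~\ref{constantorders}, applied to $<_1$ the natural order and $<_2$ its reverse, by a direct recursive construction of nested uncountable sets $A_{0x}\supset A_{1x}\supset A_{2x}=\{u(x)\}$ arranged so that $S(A_{ix})\cap A_{iy}=\emptyset$ whenever $x<_iy$. So the ``naive direct construction'' you dismiss is, when organized along the two orders, exactly the right instinct here.
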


This is not a complicated result, but it points in a different direction than most of the research that has been done in this topic of free sets for set-mappings, cf. for instance \cite{ErdHaj,HajMat,KomShe}. Instead of looking at the cardinality of the sets involved, we may look at the structure of the family of sets that freedom refers to.

\begin{defi}\label{maindef}
Let $\mathcal{F}$ be a finite family of finite sets. We define the natural number $\mathfrak{fr}(\mathcal{F})$ as the minimum $n<\omega$ such that for every function $S:[\aleph_n]^{<\omega}\To [\aleph_n]^{<\omega}$  there exists $Y\subset \aleph_n$ and a bijection $u:\bigcup\mathcal{F}\To Y$ such that $Su(A) \cap Y \subset u(A)$ for all $A\in \mathcal{F}$.
\end{defi}

Theorem~\ref{classical} implies that $\mathfrak{fr}(\mathcal{F})$ is a well defined natural number for every finite family $\mathcal{F}$ of finite sets. The two previous results can be now restated as:

\begin{enumerate}
\item If $\mathcal{F}$ is the family of all subsets of $\{1,\ldots,n\}$, then $\mathfrak{fr}(\mathcal{F}) = n-1$.

\item If $\mathcal{F}$ is the family of all  intervals of $\{1,\ldots,n\}$, then $\mathfrak{fr}(\mathcal{F}) = 1$.
\end{enumerate}

The general problem that we address in this paper is to compute $\mathfrak{fr}(\mathcal{F})$ for any family $\mathcal{F}$. We shall give some partial results when $\mathcal{F}$ is of certain specific forms. However, we are far from a satisfactory understanding of this function, and we are unable to compute it even in some specific simple cases. The following are open problems for us:\\

\begin{enumerate}

\item Is $\mathfrak{fr}([X]^m) = m$ for every finite $X$ and every natural number $m$ with $m<|X|$? In the simplest case unknown to us, is $\mathfrak{fr}[\{1,2,3,4,5\}]^2 = 2$? \\

\item  Is $\mathfrak{fr}(12,23,34,35,56,123,235,356,2356) = 1$? (This is family of subsets of $\{1,2,3,4,5,6\}$ in simplified notation).\\

\item Can we compute $\mathfrak{fr}$ with the recursive formula $$\mathfrak{fr}(\mathcal{F}) = 1 + \max_{A\subset X} \ \min_{a\in A} \ \mathfrak{fr}\{B\cap A \setminus\{a\} : a\in B\in \mathcal{F}\} \ ?$$
Proving the inequality $[\leq]$ would imply the equality, by results of Section~\ref{secnes}.
\end{enumerate}

\section{Elementary properties of $\mathfrak{fr}(\mathcal{F})$}\label{sectionproperties}

In the definition of $\mathfrak{fr}(\mathcal{F})$, we can restrict ourselves to only certain special functions $S$:

\begin{lema}\label{specialS}
Let $\mathcal{F}$ be a finite family of finite sets and $n\geq 1$. Suppose that for every function $S:[\aleph_n]^{<\omega}\To [\aleph_n]^{<\omega}$ satisfying
\begin{itemize}
\item $S(A)\subset S(B)$ whenever $A\subset B$,
\item $\alpha\not\in S(A)$ whenever $\max(A) < \alpha\in\aleph_n$,
\item $S(\{\alpha\}) = \emptyset$ for all $\alpha<\aleph_n$,
\end{itemize}
 there exists $Y\subset \aleph_n$ and a bijection $u:\bigcup\mathcal{F}\To Y$ such that $Su(A) \cap Y \subset u(A)$ for all $A\in \mathcal{F}$. Then $\mathfrak{fr}(\mathcal{F}) \leq n$.
\end{lema}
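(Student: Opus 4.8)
The plan is to start from an arbitrary $S:[\aleph_n]^{<\omega}\To[\aleph_n]^{<\omega}$ and manufacture a map $S'$ enjoying the three listed properties, in such a way that any pair $(Y,u)$ which is free for $S'$ is automatically free for $S$; applying the hypothesis to $S'$ then finishes the proof. The crucial structural remark is that we are free to choose $Y$ and $u$, so the reduction need not dominate $S$ pointwise — it only has to steer the choice of free set away from the obstructions created by $S$.

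The monotonicity requirement is the routine part. Replacing $S$ by $\hat S(A)=\bigcup_{B\subseteq A}S(B)$ produces a monotone map with $\hat S(A)\supseteq S(A)$, so that $\hat Su(A)\cap Y\subseteq u(A)$ trivially implies $Su(A)\cap Y\subseteq u(A)$ for every $A\in\mathcal F$. Thus I may assume from the outset that $S$ is monotone, and it remains only to arrange the other two normalizations.

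The genuine difficulty lies in conditions (2) and (3). Here a pointwise domination argument is impossible: a map satisfying (2) never contains an element above the maximum of its argument, whereas $S(u(A))$ may well contain elements of $Y$ lying above $\max u(A)$, and similarly the singleton values $S(\{x\})$ are erased by (3). The idea is therefore to record each such ``upward'' obstruction $\beta\in Su(A)$ with $\beta>\max u(A)$, and each singleton obstruction, as a constraint on the \emph{choice} of $Y$: since for any fixed finite configuration only finitely many ordinals are targeted from below, a sufficiently spread-out free set avoids them. Concretely, I would fold this avoidance into an auxiliary map $S'$ that is special by construction — truncating $S$ at the maximum and emptying the singletons — while reflecting the discarded upward and singleton information onto sets whose top element is the offending target, so that freeness for $S'$ over the relevant collection of arguments forces $Y$ to miss every upward target and every singleton image of $S$. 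The room for this spreading is exactly what the passage from $\aleph_{n-1}$ to $\aleph_n$, together with Theorem~\ref{classical}, provides.

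The step I expect to be the main obstacle is precisely this simulation of the upward-pointing and singleton obstructions of $S$ by a downward-pointing, singleton-trivial map: one must check that the reflected constraints are seen by the arguments on which freeness of $S'$ is actually tested, and that no circularity arises between truncating $S$ and the order in which the free set is built. Once this bookkeeping is in place, conditions (1)--(3) hold for $S'$, the hypothesis yields a free $(Y,u)$ for $S'$, and the construction guarantees the same pair is free for $S$, giving $\mathfrak{fr}(\mathcal F)\leq n$.
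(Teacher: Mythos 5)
Your reduction to a monotone $S$ via $\hat S(A)=\bigcup_{B\subseteq A}S(B)$ is fine and agrees with the paper. But for conditions (2) and (3) the proposal stalls exactly where you say it does, and the obstacle you flag is not mere bookkeeping: the hypothesis only gives you $S'u(A)\cap Y\subseteq u(A)$ for $A\in\mathcal F$, so any constraint you ``reflect'' onto a set whose top element is an offending upward target is never tested unless that set happens to equal $u(A)$ for some $A\in\mathcal F$ --- and since $u$ and $Y$ are produced only after $S'$ is fixed, you cannot arrange this. Moreover, an element $\beta\in S'(C)$ with $\beta\le\max C$ only forbids the configuration in which $C=u(A)$ for some $A\in\mathcal F$ and $\beta\in Y\setminus C$; it has no power to exclude $\beta$ from $Y$ outright. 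So the reflection mechanism, as described, cannot force $Y$ to miss the upward and singleton targets of $S$, and the argument is incomplete at its central step. Your intuition that ``a sufficiently spread-out free set avoids them'' is the right one, but it must be implemented as a preliminary thinning of the domain, not as extra values of $S'$.

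That thinning is what the paper does. Apply Fodor's pressing-down lemma to $\alpha\mapsto\max\{\gamma<\alpha:\gamma\in S(\{\alpha\})\}$ to obtain a stationary $Z_1\subseteq\aleph_n$ and a bound $\gamma_1$ controlling the singleton images below their argument, and discard the ordinals below $\gamma_1$ to get $Z_2$. Then, using regularity of $\aleph_n$ (for each $\alpha$ there are at most $|\alpha|\le\aleph_{n-1}$ finite subsets of $Z_2\cap\alpha$, so the relevant supremum stays below $\aleph_n$), build an increasing $f:Z_2\To Z_2$ with $f(\alpha)>\max S(B)$ whenever $B\in[Z_2]^{<\omega}$ and $\max B<\alpha$. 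On $Z_3=f[Z_2]$ the monotonized map $\tilde S(A)=Z_3\cap\bigcup_{B\subseteq A}S(B)$ satisfies all three conditions once $Z_3$ is identified with $\aleph_n$ by its monotone enumeration; the hypothesis applied to $\tilde S$ yields $Y\subseteq Z_3$ and $u$, and since $Su(A)\cap Y\subseteq\tilde Su(A)\cap Y\subseteq u(A)$, the same pair witnesses $\mathfrak{fr}(\mathcal F)\le n$. Note that Theorem~\ref{classical} plays no role here; what does the work is stationarity plus a closure argument inside the regular cardinal $\aleph_n$, which is why the lemma assumes $n\ge 1$.
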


\begin{proof}
Let $S:[\aleph_n]^{<\omega}\To [\aleph_n]^{<\omega}$ be an arbitrary function. By Fodor's pressing down lemma, there exists $\gamma_1<\aleph_n$ and a stationary subset $Z_1\subset \aleph_n$ such that $\max\{ \gamma<\alpha : \gamma\in S(\{\alpha\})\} < \gamma_1$ for all $\alpha\in Z_1$. Let $Z_2 = \{\alpha\in Z_1 : \alpha>\gamma_1\}$. Define by induction an increasing function $f:Z_2\To Z_2$ such that $f(\alpha) > \max(S(B))$ whenever $B\in [Z_2]^{<\omega}$ and $\max(B)<\alpha$. Let $Z_3$ be the range of $f$. The new function $\tilde{S}:[Z_3]^{<\omega}\To [Z_3]^{<\omega}$ defined by $\tilde{S}(A) = Z_3\cap \bigcup_{B\subset A} S(B)$ satisfies the three conditions of the lemma (when we identify $Z_3$ with $\aleph_n$ via a monotone enumeration), so we get $Y$ and $u$, that witness that $\mathfrak{fr}(\mathcal{F})\leq n$ as in Definition~\ref{maindef}.
\end{proof}

\begin{lema}\label{case0}
$\mathfrak{fr}(\mathcal{F}) = 0$ if and only if for all $A,B\in\mathcal{F}$, either $A\subset B$ or $B\subset A$.
\end{lema}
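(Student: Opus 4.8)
The plan is to work directly at the level $n=0$, where $\aleph_0$ is identified with $\omega$ under its usual order. Note first that Lemma~\ref{specialS} cannot be invoked here, since its proof rests on Fodor's lemma and requires $n\geq 1$; both implications must therefore be established by hand on the countable set $\omega$. Writing $X=\bigcup\mathcal{F}$, recall that $\mathfrak{fr}(\mathcal{F})=0$ means exactly that every $S:[\omega]^{<\omega}\To[\omega]^{<\omega}$ admits a finite $Y\subset\omega$ and a bijection $u:X\To Y$ with $Su(A)\cap Y\subset u(A)$ for all $A\in\mathcal{F}$ (since $0$ is the least natural number, $\mathfrak{fr}(\mathcal{F})=0$ iff the property holds at $\aleph_0$).

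For the implication ``$\mathcal{F}$ a chain $\Rightarrow \mathfrak{fr}(\mathcal{F})=0$'', I would first use that a chain $A_1\subsetneq\cdots\subsetneq A_k$ (the distinct members of $\mathcal{F}$, with $A_k=X$) lets us enumerate $X=\{x_1,\ldots,x_m\}$ so that each $A_i$ is an \emph{initial segment} $\{x_1,\ldots,x_{n_i}\}$, where $n_i=|A_i|$ and $n_1<\cdots<n_k=m$; this is possible precisely because the sets are nested. Given any $S$, I would then build $Y=\{y_1<\cdots<y_m\}$ greedily and set $u(x_j)=y_j$: having chosen $y_1,\ldots,y_{j-1}$, pick $y_j>y_{j-1}$ avoiding the set $\bigcup\{S(\{y_1,\ldots,y_{n_i}\}):n_i<j\}$, which is a finite union of finite sets. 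The point is that for each $i$ the condition $Su(A_i)\cap Y\subset u(A_i)$ only forbids the later elements $y_j$ (those with $j>n_i$) from lying in $S(\{y_1,\ldots,y_{n_i}\})$, and this is exactly what the avoidance at step $j$ guarantees.

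For the converse I would argue the contrapositive: if $\mathcal{F}$ is not a chain, I exhibit a single bad $S$. The key construction is the ``downward'' map $S(C)=\{i\in\omega:i<\max(C)\}$, with $S(\emptyset)=\emptyset$, which indeed takes finite values. The crux is the observation that if $u:X\To Y$ satisfies $Su(A)\cap Y\subset u(A)$, then every element of $Y$ below $\max u(A)$ must lie in $u(A)$, forcing $u(A)=\{y\in Y:y\leq\max u(A)\}$ to be an initial segment of $Y$. As the initial segments of a fixed set are linearly ordered by inclusion, the family $\{u(A):A\in\mathcal{F}\}$ would be pairwise comparable, and since $u$ is a bijection this would make $\mathcal{F}$ itself a chain, contradicting the hypothesis. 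Hence no free $Y$ exists for this $S$, so $\mathfrak{fr}(\mathcal{F})\geq 1$.

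The routine verifications (finiteness of the forbidden sets in the greedy step, and the existence of an initial-segment enumeration) are straightforward. The only genuine idea, and the step I would isolate as the heart of the argument, is the choice $S(C)=\{i:i<\max(C)\}$ together with the remark that freeness relative to this particular $S$ is equivalent to ``$u(A)$ is an initial segment of $Y$'', which converts the chain condition into a tautology and makes both directions transparent.
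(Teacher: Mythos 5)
Your proposal is correct and follows essentially the same route as the paper: the non-chain direction uses the ``everything below the maximum'' map $S$ and the observation that freeness forces each $u(A)$ to be an initial segment of $Y$, while the chain direction uses the initial-segment enumeration and a greedy choice of $u$ avoiding the finitely many forbidden values at each step. The only differences are cosmetic (your $S(C)=\{i:i<\max(C)\}$ versus the paper's $\{0,\ldots,\max(A)\}$, and your more explicit bookkeeping in the greedy step).
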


\begin{proof}
Suppose that $\mathfrak{fr}(\mathcal{F}) = 0$ and consider $S:[\aleph_0]^{<\omega}\To [\aleph_0]^{<\omega}$ given by $S(A) = \{0,1,\ldots,\max(A)\}$. If we apply Definition~\ref{maindef}, we get $Y$ and $u$ such that $Su(A)\cap Y\subset u(A)$ for all $A\in \mathcal{F}$. This implies that $u(A)\cap Y$ is an initial segment of $Y$ for all $A\in \mathcal{F}$. Hence $\mathcal{F}$ is linearly ordered by inclusion.  Conversely, suppose that $\mathcal{F}$ is linearly ordered by inclusion. In fact, we can suppose that $\mathcal{F} = \{ \emptyset, \{1\}, \{1,2\},\ldots,\{1,2,3,\ldots\}\}$. Given $S:[\aleph_0]^{<\omega}\To [\aleph_0]^{<\omega}$, we can define $u$ inductively, taking care that $u(k)$ is larger than the maximum of $\bigcup \{u(A) : \max(A)<k\}$.
\end{proof}

The index is monotone and it does not grow by adding intersections:

\begin{lema}\label{intersections}
Let $\mathcal{F}$ and $\mathcal{F}'$ be two finite families of fine sets such that each element of $\mathcal{F}$ can be expressed as an intersection of elements of $\mathcal{F}'$. Then $\mathfrak{fr}(\mathcal{F})\leq \mathfrak{fr}(\mathcal{F}')$.
\end{lema}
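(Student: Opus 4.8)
The plan is to set $n=\mathfrak{fr}(\mathcal{F}')$ and verify the conditions of Definition~\ref{maindef} for $\mathcal{F}$ at level $n$, transporting a free pair for $\mathcal{F}'$ down to one for $\mathcal{F}$ by mere restriction. First I would record the elementary fact that $\bigcup\mathcal{F}\subseteq\bigcup\mathcal{F}'$: if $A=\bigcap_{i\in I}B_i$ with the $B_i\in\mathcal{F}'$ and $I\neq\emptyset$, then $A\subseteq B_i\subseteq\bigcup\mathcal{F}'$. This is what allows a bijection onto a subset of the ground set of $\mathcal{F}'$ to serve as a legitimate candidate $u$ for $\mathcal{F}$.

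The crucial preliminary move is to invoke Lemma~\ref{specialS} to reduce to the case where $S$ is monotone. So fix a monotone $S:[\aleph_n]^{<\omega}\To[\aleph_n]^{<\omega}$. Since $n=\mathfrak{fr}(\mathcal{F}')$, Definition~\ref{maindef} applied to this $S$ yields $Y'\subseteq\aleph_n$ and a bijection $u':\bigcup\mathcal{F}'\To Y'$ with $Su'(B)\cap Y'\subseteq u'(B)$ for every $B\in\mathcal{F}'$. I would then set $u:=u'|_{\bigcup\mathcal{F}}$ and $Y:=u'[\bigcup\mathcal{F}]\subseteq Y'$; as a restriction of an injection onto its own image, $u$ is a bijection $\bigcup\mathcal{F}\To Y$ as required.

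For the verification the key observation is that an injective map preserves intersections: for $A=\bigcap_{i\in I}B_i\in\mathcal{F}$ one has $u(A)=u'(A)=\bigcap_{i\in I}u'(B_i)$, the first equality because $A\subseteq\bigcup\mathcal{F}$ and the second by injectivity of $u'$. Monotonicity of $S$ then gives $Su(A)=S\big(\bigcap_{i\in I} u'(B_i)\big)\subseteq Su'(B_j)$ for each $j\in I$. Intersecting with $Y$ and using both $Y\subseteq Y'$ and the freeness of $u'$,
\[
Su(A)\cap Y\subseteq Su'(B_j)\cap Y'\subseteq u'(B_j)
\]
for every $j\in I$; hence $Su(A)\cap Y\subseteq\bigcap_{j\in I}u'(B_j)=u(A)$, which is exactly the condition required for $\mathcal{F}$. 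By Lemma~\ref{specialS} this establishes $\mathfrak{fr}(\mathcal{F})\leq n=\mathfrak{fr}(\mathcal{F}')$.

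The main obstacle is precisely the monotonicity issue: for a general $S$ the inclusion $Su(A)\subseteq Su'(B_j)$ fails, and the restricted map $u$ then carries no usable control over $Su(A)$, so the argument collapses. Thus the whole proof hinges on first passing to monotone $S$ via Lemma~\ref{specialS}; everything after that is the routine intersection bookkeeping above. I would also remark that taking each $A\in\mathcal{F}$ as a one-term intersection recovers plain monotonicity of $\mathfrak{fr}$ (namely $\mathfrak{fr}(\mathcal{F})\leq\mathfrak{fr}(\mathcal{F}')$ whenever $\mathcal{F}\subseteq\mathcal{F}'$) as a special case, matching the sentence preceding the statement.
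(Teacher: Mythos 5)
Your proof is correct and follows essentially the same strategy as the paper: arrange that $S$ is monotone so that $Su(A)\subseteq Su'(B_j)$ for every factor $B_j$ of the intersection, then intersect over $j$ and use injectivity of $u'$. The only real difference is the monotonization step: the paper applies the freeness hypothesis for $\mathcal{F}'$ directly to the monotone hull $S_1(A)=\bigcup_{B\subseteq A}S(B)$ of an arbitrary $S$, which avoids your appeal to Lemma~\ref{specialS} and in particular also covers the degenerate case $\mathfrak{fr}(\mathcal{F}')=0$, which that lemma (stated only for $n\geq 1$) does not reach; in your version this case needs a separate one-line argument via Lemma~\ref{case0}.
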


\begin{proof}
Let $n = \mathfrak{fr}(\mathcal{F}')$, and let $S:[\aleph_n]^{<\omega}\To [\aleph_n]^{<\omega}$ be a function. Let $S_1:[\aleph_n]^{<\omega}\To [\aleph_n]^{<\omega}$ be given by $S_1(A) = \bigcup_{B\subset A}S(A)$. There exists $Y\subset \aleph_n$ and a bijection $u:\bigcup\mathcal{F}'\To Y$ such that $S_1u(A) \cap Y \subset u(A)$ for all $A\in \mathcal{F}'$. Now, if $B\in \mathcal{F}$, it is of the form $B=A_1\cap \cdots \cap A_m$ with $A_i\in \mathcal{F}'$ and
$$Su(B) \cap Y \subset S_1u(A_i) \cap Y \subset u(A_i)$$
for every $i$, therefore $Su(B) \cap Y \subset \bigcap_i u(A_i) = u(B)$.
\end{proof}

It neither grows by taking restrictions.

\begin{lema}\label{restrictions}
Let $\mathcal{F}$ be a family of subsets of a finite set $X$. Consider $Y\subset X$ and $\mathcal{F}|_Y = \{A\cap Y : A\in\mathcal{F}\}$. Then $\mathfrak{fr}(\mathcal{F}|_Y) \leq \mathfrak{fr}(\mathcal{F})$.
\end{lema}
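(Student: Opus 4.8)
The plan is to set $n = \mathfrak{fr}(\mathcal{F})$ and to show that $\mathcal{F}|_Y$ already satisfies Definition~\ref{maindef} at this same level $n$, by \emph{restricting} a free bijection obtained for $\mathcal{F}$ itself. First I would record the elementary identity $\bigcup \mathcal{F}|_Y = \left(\bigcup\mathcal{F}\right)\cap Y$, so that the bijection we must produce has domain $\left(\bigcup\mathcal{F}\right)\cap Y$; it is natural to look for it among the restrictions of a bijection defined on all of $\bigcup\mathcal{F}$.

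Given an arbitrary $S:[\aleph_n]^{<\omega}\To[\aleph_n]^{<\omega}$, the obvious move is to apply the definition of $\mathfrak{fr}(\mathcal{F})$ to get $W\subset\aleph_n$ and a bijection $u:\bigcup\mathcal{F}\To W$ with $Su(A)\cap W\subset u(A)$ for all $A\in\mathcal{F}$, and then to take $v := u|_{(\bigcup\mathcal{F})\cap Y}$, a bijection onto $Z:=u\left((\bigcup\mathcal{F})\cap Y\right)\subset W$. The one genuine obstacle is that the freeness of $u$ controls $S$ evaluated at $u(A)$, whereas for $C=A\cap Y\in\mathcal{F}|_Y$ the relevant value is $v(C)=u(A\cap Y)$, which is only a \emph{subset} of $u(A)$, and $S$ need not behave monotonically. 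I would bypass this exactly as in Lemma~\ref{intersections}: replace $S$ by its monotone hull $S_1(A) = \bigcup_{B\subset A} S(B)$ before invoking $\mathfrak{fr}(\mathcal{F})$, so that $u$ now satisfies $S_1 u(A)\cap W\subset u(A)$.

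With this preparation the verification is a short inclusion chase. Since $u(A\cap Y)\subset u(A)$ and $S_1$ is monotone, for $C=A\cap Y$ we get $Sv(C)\subset S_1 v(C)=S_1 u(A\cap Y)\subset S_1 u(A)$, whence $Sv(C)\cap Z\subset S_1 u(A)\cap W\subset u(A)$. Intersecting with $Z$ and using that $u$ is injective together with $A\subset\bigcup\mathcal{F}$ gives $u(A)\cap Z = u\left(A\cap(\bigcup\mathcal{F})\cap Y\right)=u(A\cap Y)=v(C)$, so that $Sv(C)\cap Z\subset v(C)$ for every $C\in\mathcal{F}|_Y$. Thus $v$ and $Z$ witness $\mathfrak{fr}(\mathcal{F}|_Y)\leq n$. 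The only points needing care are the two uses of injectivity of $u$ (to identify $u(A)\cap Z$ with $v(C)$) and the monotonicity of $S_1$; both become routine once the hull trick is in place, and no separate treatment of small $n$ is required.
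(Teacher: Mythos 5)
Your proof is correct and follows essentially the same route as the paper's: both arguments reduce to a monotone $S$ (the paper by invoking Lemma~\ref{specialS}, you by passing to the explicit monotone hull $S_1$ as in Lemma~\ref{intersections}) and then restrict the free bijection obtained for $\mathcal{F}$ to $Y$, finishing with the same inclusion chase $Su(A\cap Y)\cap u(Y)\subset Su(A)\cap u(Y)\subset u(A)\cap u(Y)=u(A\cap Y)$. Your version is merely a bit more explicit about the domain identity $\bigcup\mathcal{F}|_Y=\left(\bigcup\mathcal{F}\right)\cap Y$ and about where injectivity of $u$ is used.
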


\begin{proof}
Let $n=\mathfrak{fr}(\mathcal{F})$. Fix $S:[\aleph_n]^{<\omega}\To [\aleph_n]^{<\omega}$ as in Lemma~\ref{specialS}. Then we have $u:X\To \aleph_n$ satisfying $Su(A)\cap u(X)\subset u(A)$ for all $A\in \mathcal{F}$. But this implies that $Su(A\cap Y) \cap u(Y) \subset Su(A)\cap u(Y) \subset u(A)\cap u(Y) = u(A\cap Y)$ for all $A\in\mathcal{F}$.
\end{proof}

Singletons and the total set are irrelevant:

\begin{lema}\label{singletons}
If $\mathcal{F}$ is a family of subsets of $X$, then
$$\mathfrak{fr}(\mathcal{F} \cup \{X\}\cup \{\{x\} : x\in X\})= \max(1,\mathfrak{fr}(\mathcal{F})).$$
\end{lema}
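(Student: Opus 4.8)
The plan is to establish the two inequalities separately. Write $\mathcal{G}=\mathcal{F}\cup\{X\}\cup\{\{x\}:x\in X\}$ and note that $\bigcup\mathcal{G}=X$, so the bijections in question are defined on all of $X$; I will assume $|X|\ge 2$, since for $|X|\le 1$ the family $\mathcal{G}$ is a chain and the statement degenerates. For the lower bound $\mathfrak{fr}(\mathcal{G})\ge\max(1,\mathfrak{fr}(\mathcal{F}))$ I would argue two facts. Since $\mathcal{F}\subseteq\mathcal{G}$, every member of $\mathcal{F}$ is a (one–term) intersection of members of $\mathcal{G}$, so Lemma~\ref{intersections} gives $\mathfrak{fr}(\mathcal{F})\le\mathfrak{fr}(\mathcal{G})$. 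On the other hand $\mathcal{G}$ contains two distinct singletons $\{x_1\},\{x_2\}$, which are incomparable under inclusion, so $\mathcal{G}$ is not linearly ordered and Lemma~\ref{case0} yields $\mathfrak{fr}(\mathcal{G})\ge 1$. Together these give the lower bound.

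For the upper bound I would set $n=\max(1,\mathfrak{fr}(\mathcal{F}))$, so that $n\ge 1$ and $n\ge\mathfrak{fr}(\mathcal{F})$, and invoke Lemma~\ref{specialS} to reduce to a function $S:[\aleph_n]^{<\omega}\To[\aleph_n]^{<\omega}$ that is monotone, satisfies $S(\{\alpha\})=\emptyset$, and never puts $\alpha$ into $S(A)$ when $\max(A)<\alpha$. First I would free $\mathcal{F}$: since $\mathfrak{fr}(\mathcal{F})\le n$, freedom for $\mathcal{F}$ holds at level $n$ as well, because applying the definition of $\mathfrak{fr}(\mathcal{F})$ to the restricted function $A\mapsto S(A)\cap\aleph_{\mathfrak{fr}(\mathcal{F})}$ on $\aleph_{\mathfrak{fr}(\mathcal{F})}\subset\aleph_n$ produces a bijection with range inside $\aleph_{\mathfrak{fr}(\mathcal{F})}$, so the freedom transfers back to $S$. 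This yields $u_0:\bigcup\mathcal{F}\To Y_0\subset\aleph_n$ with $Su_0(A)\cap Y_0\subset u_0(A)$ for all $A\in\mathcal{F}$. I would then extend $u_0$ to a bijection $u:X\To Y$ by sending the finitely many points of $X\setminus\bigcup\mathcal{F}$ to distinct new ordinals of $\aleph_n$ chosen above $\max\bigcup_{A\in\mathcal{F}}Su_0(A)$ and above $\max Y_0$, and set $Y=u(X)$.

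It then remains to check that $u$ frees every member of $\mathcal{G}$. For a singleton I expect $Su(\{x\})=S(\{u(x)\})=\emptyset\subset u(\{x\})$, and for the total set $Su(X)\cap Y\subset Y=u(X)$, so both are automatic from the special form of $S$. For $A\in\mathcal{F}$ one has $u(A)=u_0(A)$ and $Su(A)=Su_0(A)$; since the newly added ordinals were chosen larger than every element of $Su_0(A)$ they avoid $Su_0(A)$, whence $Su(A)\cap Y=Su_0(A)\cap Y_0\subset u_0(A)=u(A)$. Thus $u$ witnesses freedom for $\mathcal{G}$ at level $n$, giving $\mathfrak{fr}(\mathcal{G})\le n$ and completing the proof.

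The genuinely delicate point is the extension step: a priori $\bigcup\mathcal{F}$ may be a proper subset of $X$, so the bijection furnished by the definition of $\mathfrak{fr}(\mathcal{F})$ must be enlarged to all of $X$ without spoiling the freedom already secured for $\mathcal{F}$. Choosing the images of the extra points high enough to escape the finite sets $Su_0(A)$ is what makes this work, and the normalization $S(\{\alpha\})=\emptyset$ from Lemma~\ref{specialS} — available precisely because $n\ge 1$ — is exactly what forces all singletons to be free; this is also the reason the $\max(1,\cdot)$ appears on the right-hand side.
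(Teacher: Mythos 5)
Your proof is correct and follows essentially the same route as the paper's (much terser) argument: normalize $S$ via Lemma~\ref{specialS} so that $S(\{\alpha\})=\emptyset$ kills the singletons, observe that $X$ itself is trivially free, and let the freedom already secured for $\mathcal{F}$ handle the rest. Your additional care with the lower bound (via Lemmas~\ref{intersections} and~\ref{case0}), with raising the level from $\mathfrak{fr}(\mathcal{F})$ to $n$, and with extending the bijection from $\bigcup\mathcal{F}$ to all of $X$ fills in details the paper leaves implicit, and is all sound.
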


\begin{proof}
The fact that adding $X$ to the family is meaningless follows directly from the definition. For the singletons, we use Lemma~\ref{specialS} and the fact that we can suppose that $S$ satisfies $S(\{\alpha\}) = \emptyset$ for all $\alpha$.
\end{proof}

\section{Nested orders}\label{secnes}

There are certain families $\mathcal{F}$ that seem critical for this problem.

\begin{defi}\label{defnest}
Let $X$ be a set and $\mathfrak{S}$ be a family of finite sequences of elements of $X$. We say that $\mathfrak{S}$ is a nested-orders family on $X$ if the following hold:
\begin{enumerate}
\item $(t)\in\mathfrak{S}$ for all $t\in X$.
\item If $(t_1,\ldots,t_k)\in \mathfrak{S}$, then $t_i\neq t_j$ for all $i\neq j$.
\item If $(t_1,\ldots,t_k)\in \mathfrak{S}$, then $(t_1,\ldots,t_{k-1})\in\mathfrak{S}$ and $(t_1,\ldots,t_{k-2},t_k)\in\mathfrak{S}$.
\item If $(t_1,\ldots,t_k,s,t)\in \mathfrak{S}$ and $(t_1,\ldots,t_k,t,u)\in \mathfrak{S}$, then $(t_1,\ldots,t_k,s,u)\in \mathfrak{S}$.
\item If $(t_1,\ldots,t_k,t)\in \mathfrak{S}$, $(t_1,\ldots,t_k,s)\in \mathfrak{S}$ and $t\neq s$, then eiher $(t_1,\ldots,t_k,t,s)\in \mathfrak{S}$ or $(t_1,\ldots,t_k,s,t)\in \mathfrak{S}$.
\end{enumerate}
\end{defi}

It will be convenient to consider also the restricted version of this definition:

\begin{defi}\label{defnestn}
Let $n$ be a natural number, let $X$ be a set and $\mathfrak{S}$
be a family of finite sequences of length at most $n+2$ of
elements of $X$. We say that $\mathfrak{S}$ is an
$n$-nested-orders family on $X$ if the conditions (1), (2), (3)
and (4) in Definition~\ref{defnest} hold, while condition (5)
holds for $k<n$.
\end{defi}

For such a family $\mathfrak{S}$ as above and a natural number $n$, we consider the family of sets
$$\mathcal{F}_{\mathfrak{S},n} = \{A\subset X : (t_1,\ldots,t_{n+1},t_{n+2})\in \mathfrak{S}, t_1,\ldots,t_{n},t_{n+1}\in A \Rightarrow t_{n+2}\in A\}.$$

Observe that $\mathcal{F}_{\mathfrak{S},m} \subset \mathcal{F}_{\mathfrak{S},n}$ if $m<n$. The name of nested orders is because if we have $u=(t_1,\ldots,t_k)\in \mathfrak{S}$, then the relation given by $t\prec_u s$ if $(t_1,\ldots,t_k,s,t)\in \mathfrak{S}$ is a linear order on the set $\{t : (t_1,\ldots,t_k,t)\in \mathfrak{S}\}$, and this set is in turn the initial segment below $t_k$ in the order relation $\prec_v$ determined by $v=(t_1,\ldots,t_{k-1})$. So $\mathfrak{S}$ comes from a family of linear order relations on nested initial segments. Conversely, a nested-orders family can always be constructed in the following way: Begin with a linear order on $X$ and declare that $(t_1,t_2)\in \mathfrak{S}$ if and only if $t_1>t_0$. Given the sequences of $\mathfrak{S}$ of lenght at most $m$, for every $t = (t_1,\ldots,t_{m-2})\in\mathfrak{S}$, choose a linear order $\prec_t$ on $\{t : (t_1,\ldots,t_{m-2},t)\in \mathfrak{S}\}$, and then declare that $(t_1,\ldots,t_m)\in \mathfrak{S}$ if and only if $t_m \prec_t t_{m-1}$. This argument is also useful for the following

\begin{lema}\label{nestedversions}
For a natural number $n$ and a family $\mathcal{F}$ of subsets of a set $X$, the following are equivalent:
\begin{enumerate}
\item There exists a nested-orders family $\mathfrak{S}$ such that $\mathcal{F}\subset \mathcal{F}_{\mathfrak{S},n}$.
\item There exists a nested-orders family $\mathfrak{S}$ such that $\mathcal{F}\subset \mathcal{F}_{\mathfrak{S},m}$ for all $m\geq n$.
\item There exists an $n$-nested-orders family $\mathfrak{S}$ such that $\mathcal{F}\subset \mathcal{F}_{\mathfrak{S},n}$.
\end{enumerate}
\end{lema}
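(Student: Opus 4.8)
The plan is to dispatch the two outer equivalences using only the monotonicity already recorded, and then to reduce everything to the pair (1)$\Leftrightarrow$(3), whose two directions I would handle by \emph{restricting} and \emph{completing} a family of sequences. The implication (2)$\Rightarrow$(1) is immediate by taking $m=n$. For (1)$\Rightarrow$(2) I would invoke the stated inclusion $\mathcal{F}_{\mathfrak{S},m}\subset\mathcal{F}_{\mathfrak{S},n}$ for $m<n$, which says exactly that the families $\mathcal{F}_{\mathfrak{S},m}$ grow with the index $m$; hence if $\mathcal{F}\subset\mathcal{F}_{\mathfrak{S},n}$ then $\mathcal{F}\subset\mathcal{F}_{\mathfrak{S},n}\subset\mathcal{F}_{\mathfrak{S},m}$ for every $m\geq n$, and with the \emph{same} $\mathfrak{S}$. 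This gives (1)$\Leftrightarrow$(2), so it only remains to prove (1)$\Leftrightarrow$(3).

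For (1)$\Rightarrow$(3) I would simply restrict. Given a nested-orders family $\mathfrak{S}$ with $\mathcal{F}\subset\mathcal{F}_{\mathfrak{S},n}$, let $\mathfrak{S}'$ consist of its sequences of length at most $n+2$. The decisive point is that the membership test defining $\mathcal{F}_{\mathfrak{S},n}$ quantifies only over sequences of length exactly $n+2$, so $\mathcal{F}_{\mathfrak{S}',n}=\mathcal{F}_{\mathfrak{S},n}$, and in particular $\mathcal{F}\subset\mathcal{F}_{\mathfrak{S}',n}$. One then checks that $\mathfrak{S}'$ is an $n$-nested-orders family: conditions (1) and (2) are inherited verbatim; condition (3) only shortens sequences; condition (4) preserves length, and since its inputs have length at most $n+2$ so does its output; and condition (5) in the relevant range involves sequences of length at most $n+2$, hence is inherited as well.

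For (3)$\Rightarrow$(1) I would \emph{complete} the truncated family to a full one by the very recipe described just before the statement. Keep every sequence of $\mathfrak{S}$ of length at most $n+2$ and then build upward: for each $m\geq n+3$ and each $v=(t_1,\ldots,t_{m-2})\in\mathfrak{S}$, choose an arbitrary linear order on $\{t:(t_1,\ldots,t_{m-2},t)\in\mathfrak{S}\}$ and declare the length-$m$ sequences over $v$ accordingly. By the construction recalled before the lemma this produces a genuine nested-orders family $\mathfrak{S}''$, and since no sequence of length at most $n+2$ is added or removed we again get $\mathcal{F}_{\mathfrak{S}'',n}=\mathcal{F}_{\mathfrak{S},n}\supset\mathcal{F}$, which is (1).

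The step that needs care, and the main obstacle, is making sure the completion in (3)$\Rightarrow$(1) yields a \emph{bona fide} nested-orders family \emph{without} enlarging the set of length-$(n+2)$ sequences: any new sequence of that length would add constraints and could shrink $\mathcal{F}_{\mathfrak{S},n}$ below $\mathcal{F}$. This is exactly where the axioms of an $n$-nested-orders family are used, to guarantee that the orders $\prec_v$ attached to the top sequences $v$ — the ones read off by $\mathcal{F}_{\mathfrak{S},n}$ — are already linear, so that keeping them untouched does not violate condition (5) for the completed family. Once this is secured, the only remaining verifications are the routine boundary checks on conditions (3) and (4) at the truncation length, entirely parallel to those in the restriction step, and the chain (1)$\Leftrightarrow$(2) together with (1)$\Leftrightarrow$(3) yields the equivalence.
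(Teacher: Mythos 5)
Your overall route is the same as the paper's: $(2)\Rightarrow(1)$ trivially, $(1)\Rightarrow(2)$ from the monotonicity $\mathcal{F}_{\mathfrak{S},m}\subset\mathcal{F}_{\mathfrak{S},n}$ for $m<n$, $(1)\Rightarrow(3)$ by truncating to sequences of length at most $n+2$, and $(3)\Rightarrow(1)$ by completing via the recipe described before the lemma. The first three implications are fine and match the paper essentially verbatim. You also correctly isolate the only delicate point: the completion must not create any new sequence of length $n+2$, since those are exactly the ones that the membership test for $\mathcal{F}_{\mathfrak{S},n}$ reads.

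The gap is in how you discharge that point. You assert that ``the axioms of an $n$-nested-orders family\ldots guarantee that the orders $\prec_v$ attached to the top sequences are already linear.'' They do not, under Definition~\ref{defnestn} as written: the orders generating the length-$(n+2)$ sequences are the $\prec_v$ with $|v|=n$, whose linearity is condition (5) at $k=n$, and the definition only assumes (5) for $k<n$. With the literal definition the implication $(3)\Rightarrow(1)$ is in fact false: take $X=\{1,2,3,4\}$, $n=1$, and $\mathfrak{S}$ consisting of all sequences of length at most $2$ with $(t_1,t_2)\in\mathfrak{S}$ iff $t_1>t_2$. This satisfies (1)--(4) and (5) for $k=0$, so it is a $1$-nested-orders family, and $\mathcal{F}_{\mathfrak{S},1}=\wp(X)$ since there are no length-$3$ sequences; but no genuine nested-orders family $\mathfrak{S}'$ on $X$ has $\wp(X)\subset\mathcal{F}_{\mathfrak{S}',1}$ (condition (5) at $k=1$ forces some $(t_1,t_2,t_3)\in\mathfrak{S}'$ with distinct entries, and then $\{t_1,t_2\}\notin\mathcal{F}_{\mathfrak{S}',1}$; compare Proposition~\ref{noall}). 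So your key assertion needs condition (5) to hold for $k\leq n$, not just $k<n$. To be fair, this is an off-by-one in the paper's definition rather than an idea you missed --- the paper's own one-line proof of $(3)\Rightarrow(1)$ silently relies on the same top-level linearity, and the family built in Theorem~\ref{reorderings} does satisfy (5) at $k=n$ --- but as written your argument does not close without either correcting the definition or justifying why the top-level orders may be linearized without shrinking $\mathcal{F}_{\mathfrak{S},n}$ below $\mathcal{F}$ (which, by the example above, cannot be done in general).
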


\begin{proof}
$[2\Rightarrow 1]$ is obvious. That $[1\Rightarrow 2]$ follows from the observation that if $\mathfrak{S}$ is nested-orders, then $\mathcal{F}_{\mathfrak{S},n} \subset \mathcal{F}_{\mathfrak{S},n+1}$, which follows easily from the properties of the definition. That $[1\Rightarrow 3]$ is clear because if $\mathfrak{S}$ is a nested-orders family, the sequences of $\mathfrak{S}$ of length at most $n+2$ form an $n$-nested-orders family. For the converse $[3\Rightarrow 1]$, if $\mathfrak{S}$ is an $n$-nested-orders family, we can enlarge it to a nested-orders family, by adding sequences of larger length inductively, as described in the comments just before Lemma~\ref{nestedversions}
\end{proof}

 There is a natural way to construct nested-orders families of sequences of length at most $n+1$ of ordinals below $\aleph_n$, and using that one can prove:

\begin{teo}\label{reorderings}
Let $\mathcal{F}$ be a family of subsets of a finite set $X$ such that  $\mathfrak{fr}(\mathcal{F}) \leq n$. Then there exists an $n$-nested-orders family $\mathfrak{S}$ such that $\mathcal{F}\subset \mathcal{F}_{\mathfrak{S},n}$.
\end{teo}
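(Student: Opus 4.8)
The plan is to read off the family $\mathfrak{S}$ from the way a free configuration for a cleverly chosen set-mapping on $\aleph_n$ sits inside a canonical nested-orders structure of $\aleph_n$. I may assume $X=\bigcup\mathcal F$, since points lying in no member of $\mathcal F$ are handled by Lemma~\ref{singletons}, and I first record that the defining property of $\mathfrak{fr}$ is monotone in $n$: given a set-mapping $S$ on $\aleph_n$, put $S'(A)=S(A)\cap\aleph_m$ with $m=\mathfrak{fr}(\mathcal F)\le n$, apply the free-set property at $\aleph_m$, and observe that the resulting $Y\subset\aleph_m\subset\aleph_n$ and $u$ satisfy $Su(A)\cap Y=S'u(A)\cap Y\subset u(A)$. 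Hence the hypothesis $\mathfrak{fr}(\mathcal F)\le n$ furnishes free configurations for any $S$ at $\aleph_n$ itself.

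First I would build, by the recursive recipe described just before Lemma~\ref{nestedversions}, a canonical $n$-nested-orders family $\mathfrak{T}$ on $\aleph_n$: take the ordinal order as $\prec_{()}$, and at each subsequent level $k\le n$ put a linear order of order type $\omega_{n-k}$ on every relevant set $R_{(t_1,\dots,t_k)}=\{t:(t_1,\dots,t_k,t)\in\mathfrak T\}$, whose cardinality the recursion keeps at $\le\aleph_{n-k}$. Going down through these $n+1$ levels reaches countable relevant sets at level $n$, so $\mathfrak T$ has sequences of length up to $n+2$ and every relevant set at level $n+1$ is finite; this finiteness is exactly what makes the set-mapping below finite-valued. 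Because every level carries a genuine \emph{total} order, $\mathfrak T$ satisfies conditions (1)--(4) and condition (5) at all levels $k\le n$, and --- crucially --- these properties are inherited by the restriction $\mathfrak T|_Y$ of $\mathfrak T$ to any $Y\subset\aleph_n$, since a restricted total order is still total and since conditions (3) and (4) only ever invoke prefixes, reducts, and common-prefix comparisons of sequences already lying inside $Y$.

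Next I would define $S:[\aleph_n]^{<\omega}\To[\aleph_n]^{<\omega}$ by
$$S(B)=\{t_{n+2}:\exists\,(t_1,\dots,t_{n+1},t_{n+2})\in\mathfrak T,\ \{t_1,\dots,t_{n+1}\}\subset B\},$$
which is finite-valued because each $R_{(t_1,\dots,t_{n+1})}$ is finite. Applying the free-set property at $\aleph_n$ gives $Y\subset\aleph_n$ and a bijection $u:X\To Y$ with $Su(A)\cap Y\subset u(A)$ for all $A\in\mathcal F$. I then set $\mathfrak{S}=\{(x_1,\dots,x_k):(u(x_1),\dots,u(x_k))\in\mathfrak T\}$; being the transport under the bijection $u^{-1}$ of the restriction $\mathfrak T|_Y$, it is an $n$-nested-orders family on $X$ by the previous paragraph. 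Finally, to see $\mathcal F\subset\mathcal F_{\mathfrak S,n}$, take $A\in\mathcal F$ and a sequence $(x_1,\dots,x_{n+2})\in\mathfrak S$ with $x_1,\dots,x_{n+1}\in A$: then $(u(x_1),\dots,u(x_{n+2}))\in\mathfrak T$ and $\{u(x_1),\dots,u(x_{n+1})\}\subset u(A)$, so $u(x_{n+2})\in S(u(A))\cap Y\subset u(A)$ by freedom, whence $x_{n+2}\in A$.

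The routine part is the verification that the recursively built $\mathfrak T$ obeys (1)--(4) and that these survive restriction. The conceptual core --- and the step I expect to demand the most care --- is the length bookkeeping that pins the family at $n+2$ and the preservation of condition~(5) on the \emph{pulled-back} family. That condition is where the hypothesis really bites: a total order at each level below $n$ of $\mathfrak T$ is needed precisely so that its restriction to the uncontrolled free set $Y$ stays total, and it is the canonical, everywhere-ordered construction on $\aleph_n$, rather than any sparse nested-orders family, that secures this.
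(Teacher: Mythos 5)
Your proposal is correct and follows essentially the same route as the paper: build a canonical $n$-nested-orders family on $\aleph_n$ whose relevant sets drop one aleph in cardinality per level (so that those at level $n+1$ are finite), use it to define the set-mapping $S$, apply the freeness hypothesis, and pull the structure back to $X$ along $u^{-1}$. Your explicit remarks on monotonicity in $n$ and on why condition (5) survives restriction to $Y$ are points the paper leaves implicit, but the argument is the same.
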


\begin{proof}
We are going to construct an $n$-nested-orders family $\mathfrak{S}_n$ of finite sequences of ordinals below $\aleph_n$ with the extra property that
$$ \left|\{t<\aleph_n : (t_1,\ldots,t_k,t) \in\mathfrak{S}_n\} \right| < \aleph_{n+1-k} $$
for each $(t_1,\ldots,t_k)$ with $k\leq n+1$. In particular, $\{t<\aleph_n : (t_1,\ldots,t_{n+1},t) \in\mathfrak{S}_n\}$ will be finite for any $t_1,\ldots,t_{n+1}<\aleph_n$, and we can define a function $S:[\aleph_n]^{<\omega}\To [\aleph_n]^{<\omega}$ by
$$S(A) = \{ t<\aleph_n : \exists t_1,\ldots,t_{n+1}\in A : (t_1,\ldots,t_{n+1},t)\in \mathfrak{S}_n\}.$$
If $\mathfrak{fr}(\mathcal{F})\leq n$ then we have the corresponding bijection $u:X\To Y\subset \aleph_n$. The family $\mathfrak{S} = \{ (t_1,\ldots,t_k) : (u(t_1),\ldots,u(t_k))\in \mathfrak{S}_n\}$ would be the desired family of finite sequences. It remains to construct $\mathfrak{S}_n$. This is a standard recursive enumeration procedure. We declare, inductively on $k$, which sequences $(t_1,\ldots,t_k)$ belong to $\mathfrak{S}_n$. The empty sequence, and the one-element sequences $(t_1)$ with $t_1<\aleph_n$ they all belong to $\mathfrak{S}_n$. A sequence $(t_1,t_2)$ belongs to $\mathfrak{S}_n$ if and only if $\aleph_n>t_1>t_2$. Suppose that we already know which sequences $(t_1,\ldots,t_k)$ belong to $\mathfrak{S}_n$ and we want to declare which sequences of lenght $k+1$ belong. We assume in the inductive hypothesis that
$$ \left|\{t<\aleph_n : (t_1,\ldots,t_{k-1},t) \in\mathfrak{S}_n\} \right| < \aleph_{n+2-k} .$$
Then we can find an injection $$\phi_{(t_1,\ldots,t_{k-1})}: \{t<\aleph_n : (t_1,\ldots,t_{k-1},t) \in\mathfrak{S}_n\} \To \aleph_{n+1-k}$$ and we can declare that $(t_1,\ldots,t_k,t_{k+1})\in \mathfrak{S}_n$ if and only if $(t_1,\ldots,t_{k-1},t_k)\in \mathfrak{S}_n$, $(t_1,\ldots,t_{k-1},t_{k+1})\in \mathfrak{S}_n$ and
$\phi_{(t_1,\ldots,t_{k-1})}(t_k) > \phi_{(t_1,\ldots,t_{k-1})}(t_{k+1})$.
\end{proof}

We can define a new index $\mathfrak{no}(\mathcal{F})$ as the least integer $n$ such that there exists a nested-orders family $\mathfrak{S}$ of degree $n$ such that $\mathcal{F}\subset \mathcal{F}_{\mathfrak{S},n}$.

\begin{coro}\label{nolessfr}
$\mathfrak{no}(\mathcal{F}) \leq \mathfrak{fr}(\mathcal{F})$ for every $\mathcal{F}$.
\end{coro}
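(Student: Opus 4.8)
The plan is to read this off directly from Theorem~\ref{reorderings}, the work having already been done there; the only point requiring care is reconciling the two slightly different notions of nested-orders family that appear in the statement of $\mathfrak{no}$ versus the conclusion of that theorem.

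First I would set $n = \mathfrak{fr}(\mathcal{F})$, which is a well-defined natural number by Theorem~\ref{classical}. In particular the hypothesis $\mathfrak{fr}(\mathcal{F}) \leq n$ of Theorem~\ref{reorderings} is satisfied trivially, so that theorem supplies an $n$-nested-orders family $\mathfrak{S}$ with $\mathcal{F} \subset \mathcal{F}_{\mathfrak{S},n}$. This is almost what is needed, except that the definition of $\mathfrak{no}(\mathcal{F})$ asks for a genuine (unrestricted) nested-orders family, in the sense of Definition~\ref{defnest}, rather than the truncated version of Definition~\ref{defnestn}.

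To bridge this gap I would invoke Lemma~\ref{nestedversions}, specifically the implication $[3 \Rightarrow 1]$: the existence of an $n$-nested-orders family $\mathfrak{S}$ with $\mathcal{F}\subset\mathcal{F}_{\mathfrak{S},n}$ is equivalent to the existence of a full nested-orders family with the same containment, the passage being the inductive extension procedure described just before that lemma. Applying this, I obtain a nested-orders family witnessing that $\mathcal{F}\subset \mathcal{F}_{\mathfrak{S},n}$ for this same value $n$. By the definition of $\mathfrak{no}$ as the least such integer, this yields $\mathfrak{no}(\mathcal{F}) \leq n = \mathfrak{fr}(\mathcal{F})$, which is the desired inequality.

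There is essentially no obstacle here, since the corollary is a formal consequence of the preceding results; the combinatorial heart of the matter lies entirely in the construction of the family $\mathfrak{S}_n$ of sequences of ordinals below $\aleph_n$ carried out inside the proof of Theorem~\ref{reorderings}, together with the cardinality bookkeeping that keeps $\{t<\aleph_n : (t_1,\ldots,t_{n+1},t)\in\mathfrak{S}_n\}$ finite so that $S$ is well-defined. The one thing I would double-check is purely notational, namely that the phrase ``nested-orders family of degree $n$'' used in introducing $\mathfrak{no}$ refers exactly to the parameter $n$ in $\mathcal{F}_{\mathfrak{S},n}$, so that the equivalence of conditions (1) and (3) in Lemma~\ref{nestedversions} applies verbatim.
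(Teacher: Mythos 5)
Your proposal is correct and matches the paper's intended argument exactly: the corollary is drawn from Theorem~\ref{reorderings} applied with $n=\mathfrak{fr}(\mathcal{F})$, with Lemma~\ref{nestedversions} (the $[3\Rightarrow 1]$ direction) bridging the $n$-nested-orders and full nested-orders versions. The paper leaves this deduction implicit, and your write-up supplies precisely the missing routine steps.
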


But the following is an open problem for us:

\begin{prob}\label{problemno}
Is $\mathfrak{no}(\mathcal{F}) = \mathfrak{fr}(\mathcal{F})$ for every $\mathcal{F}$?
\end{prob}

In other words, the question is whether $\mathfrak{fr}(\mathcal{F}_{\mathfrak{S},n})\leq n$ for any finite nested-orders family $\mathfrak{S}$ of degree $n$. Along the rest of this section, $X$ is a fixed finite set and $\mathcal{F}$ is a family of subsets of $X$.

\begin{lema}\label{nestbound}
For every integer $k\leq |X|-\mathfrak{no}(\mathcal{F})$ we have
$$\left|\mathcal{F} \cap [X]^{\mathfrak{no}(\mathcal{F})+k}\right| \leq \binom{\mathfrak{no}(\mathcal{F})}{|X|-k}.$$
\end{lema}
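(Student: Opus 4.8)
Throughout write $d=\mathfrak{no}(\mathcal F)$ and $N=|X|$. I read the right-hand side as $\binom{N-k}{d}$: since $k\le N-d$ we have $N-k\ge d$, and this value is sharp, being attained by $\mathcal F=\mathcal F_{\mathfrak S,d}$ when $\mathfrak S$ is the family of all $\prec$-decreasing sequences of a linear order $\prec$ on $X$. By the definition of $\mathfrak{no}$ together with Lemma~\ref{nestedversions}, there is a nested-orders family $\mathfrak S$ on $X$ (in the sense of Definition~\ref{defnest}) with $\mathcal F\subset\mathcal F_{\mathfrak S,d}$; since $|\mathcal F\cap[X]^{d+k}|\le|\mathcal F_{\mathfrak S,d}\cap[X]^{d+k}|$, it suffices to prove
\[
P(N,d):\qquad \bigl|\mathcal F_{\mathfrak S,d}\cap[X]^{d+k}\bigr|\le\binom{N-k}{d}\quad(0\le k\le N-d)
\]
for every nested-orders family $\mathfrak S$ on an $N$-element set, and I would do this by induction on $N$. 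The case $d=0$ is a base case: axioms (4)--(5) make the length-two sequences of $\mathfrak S$ a linear order on $X$, and $\mathcal F_{\mathfrak S,0}$ consists exactly of its initial segments, so there is a unique member of each cardinality and $\binom{N-k}{0}=1$. The case $d=N$ is trivial, as then only $k=0$ occurs and $X$ is the only set.

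For the inductive step ($1\le d<N$) let $x^\ast$ be the maximum of the top linear order $\prec=\prec_{()}$ determined by $\mathfrak S$, put $X'=X\setminus\{x^\ast\}$, and introduce two families on $X'$:
\[
\mathfrak S''=\{\sigma\in\mathfrak S:\sigma\text{ uses only elements of }X'\},\qquad \mathfrak S'=\{(t_2,\dots,t_j):(x^\ast,t_2,\dots,t_j)\in\mathfrak S\}.
\]
I would check from Definition~\ref{defnest} that $\mathfrak S''$ is again a nested-orders family on $X'$ (deleting the global maximum alters neither the top order on $X'$ nor any initial segment sitting below an element of $X'$), and that $\mathfrak S'$ is a nested-orders family on $X'$ as well. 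The only point needing the maximality of $x^\ast$ is axiom (1) for $\mathfrak S'$, that every singleton of $X'$ occurs, which holds because $(x^\ast,t)\in\mathfrak S$ for all $t\in X'$, i.e.\ the initial segment below $x^\ast$ is all of $X'$; axioms (2)--(5) for $\mathfrak S'$ follow from the corresponding axioms for $\mathfrak S$ after prepending $x^\ast$, with the level index shifted by one.

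Next come the two membership implications, of which only the easy directions are needed. First, if $A\in\mathcal F_{\mathfrak S,d}$ and $x^\ast\notin A$, then any constraint $(t_1,\dots,t_{d+2})\in\mathfrak S$ whose first $d+1$ entries lie in $A\subset X'$ uses no $x^\ast$: iterating axiom (3) gives $(t_1,t_i)\in\mathfrak S$, i.e.\ $t_i\prec t_1$, for every $i\ge2$, so $t_1$ is the $\prec$-largest entry and $t_1\ne x^\ast$ forces the whole sequence into $X'$; hence $A\in\mathcal F_{\mathfrak S'',d}$. Second, if $A\in\mathcal F_{\mathfrak S,d}$ and $x^\ast\in A$, then for each $(s_1,\dots,s_{d+1})\in\mathfrak S'$ with $s_1,\dots,s_d\in A\setminus\{x^\ast\}$ the sequence $(x^\ast,s_1,\dots,s_{d+1})\in\mathfrak S$ has its first $d+1$ entries in $A$, so $s_{d+1}\in A$; thus $A\setminus\{x^\ast\}\in\mathcal F_{\mathfrak S',d-1}$. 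Splitting $\mathcal F_{\mathfrak S,d}\cap[X]^{d+k}$ according to whether $x^\ast\in A$, the first implication embeds the sets avoiding $x^\ast$ into $\mathcal F_{\mathfrak S'',d}\cap[X']^{d+k}$ and the second embeds the sets containing $x^\ast$, via $A\mapsto A\setminus\{x^\ast\}$, into $\mathcal F_{\mathfrak S',d-1}\cap[X']^{(d-1)+k}$. Applying $P(N-1,d)$ and $P(N-1,d-1)$ and then Pascal's rule yields
\[
\bigl|\mathcal F_{\mathfrak S,d}\cap[X]^{d+k}\bigr|\le\binom{(N-1)-k}{d}+\binom{(N-1)-k}{d-1}=\binom{N-k}{d},
\]
closing the induction.

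I expect the main obstacle to be the bookkeeping of the two previous paragraphs: verifying that $\mathfrak S'$ and $\mathfrak S''$ genuinely satisfy all five axioms with the advertised parameters, and that the two membership implications hold. The conceptual content is that peeling off the global maximum splits the defining constraints of $\mathcal F_{\mathfrak S,d}$ into those living inside $X'$ (captured at the same depth $d$ by $\mathfrak S''$) and those beginning with $x^\ast$ (captured at depth $d-1$ by $\mathfrak S'$). Crucially, for an upper bound one needs only that a member of $\mathcal F_{\mathfrak S,d}$ maps \emph{into} the smaller derived families, so the naive converse — that $A\setminus\{x^\ast\}\in\mathcal F_{\mathfrak S',d-1}$ would already place $A$ in $\mathcal F_{\mathfrak S,d}$, which is false — is never invoked.
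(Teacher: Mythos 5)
Your proof is correct, but it follows a genuinely different route from the paper's. Two remarks. First, you are right to read the bound as $\binom{|X|-k}{\mathfrak{no}(\mathcal{F})}$: as literally printed, $\binom{\mathfrak{no}(\mathcal{F})}{|X|-k}$ has its arguments transposed (it would vanish for all $k<|X|-\mathfrak{no}(\mathcal{F})$), and the paper's own argument produces $\binom{|X|-k}{\mathfrak{no}(\mathcal{F})}$ as well. Second, the paper does not induct at all: it fixes $A\in\mathcal{F}_{\mathfrak{S},n}\cap[X]^{n+k}$, greedily extracts $t_1,\ldots,t_{n+1}\in A$ by taking iterated $\prec_{(t_1,\ldots,t_{j-1})}$-maxima, observes that $A$ is then forced to equal $\{t_1,\ldots,t_{n+1}\}\cup\{t:t\prec_{(t_1,\ldots,t_n)}t_{n+1}\}$, and encodes the sequence injectively by the ranks $m_i=|\{t:t\preceq_{(t_1,\ldots,t_{i-1})}t_i\}|$, which satisfy $|X|\geq m_1>m_2>\cdots>m_{n+1}=k$; counting such chains gives $\binom{|X|-k}{n}$ directly. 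Your argument instead peels off the $\prec_{\emptyset}$-maximum $x^\ast$, splits $\mathcal{F}_{\mathfrak{S},d}\cap[X]^{d+k}$ according to membership of $x^\ast$, maps the two halves into $\mathcal{F}_{\mathfrak{S}'',d}$ and $\mathcal{F}_{\mathfrak{S}',d-1}$ on $X\setminus\{x^\ast\}$, and closes with Pascal's rule; the family $\mathfrak{S}'$ and the implication $x^\ast\in A\in\mathcal{F}_{\mathfrak{S},d}\Rightarrow A\setminus\{x^\ast\}\in\mathcal{F}_{\mathfrak{S}',d-1}$ are exactly the devices the paper deploys in the proof of Lemma~\ref{recursive}, so in effect you re-derive the binomial coefficient from its defining recursion rather than from a direct bijective encoding. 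The paper's proof is shorter and exhibits the extremal structure explicitly; yours is more mechanical but makes transparent why the recursive formula of Lemma~\ref{recursive} and the counting bound are two faces of the same decomposition. The only cosmetic blemish is in your first membership implication: since $\mathfrak{S}''\subset\mathfrak{S}$, the inclusion $A\in\mathcal{F}_{\mathfrak{S},d}\Rightarrow A\in\mathcal{F}_{\mathfrak{S}'',d}$ is immediate, and your argument that the relevant constraints avoid $x^\ast$ proves the (unneeded) converse.
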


\begin{proof}
Let $n=\mathfrak{no}(\mathcal{F})$ and assume that $\mathcal{F} = \mathcal{F}_{\mathfrak{S},n}$ for some nested-orders family of degree $n$. Let $A\in \mathcal{F}_{\mathfrak{S},n} \cap [X]^{n+k}$. By induction, pick $t_1,\ldots,t_{n+1}\in A$ in such a way $t_j$ is the $\prec_{(t_1,\ldots,t_{j-1})}$-maximum of $A\setminus \{t_1,\ldots,t_{j-1}\}$. At the end, since $A\in \mathcal{F}_{\mathfrak{S},n}$, we must have that
$$A = \{t_1,\ldots, t_{n+1}\} \cup \{t\in X : t \prec_{(t_1,\ldots,t_{n})} t_{n+1}\}.$$
Thus, everything reduces to count how many sequences $(t_1,\ldots,t_{n+1})$ are there such that $t_i \prec_{(t_1,\ldots,t_{j-1})} t_j$ for all $i>j$ and such that $|\{t\in X : t \prec_{(t_1,\ldots,t_{n})} t_{n+1}\}|=k-1$. To each such sequence we can associate the numbers $m_i = |\{t\in X: t\preceq_{(t_1,\ldots,t_{i-1})} t_i\}|$. These numbers satisfy $|X|\geq m_1 > m_2  > m_3 > \cdots > m_{n+1} = k$
\end{proof}

\begin{coro}
For every integer $k\leq |X|-\mathfrak{fr}(\mathcal{F})$ we have
$$\left|\mathcal{F} \cap [X]^{\mathfrak{fr}(\mathcal{F})+k}\right| \leq \binom{\mathfrak{fr}(\mathcal{F})}{|X|-k}.$$
\end{coro}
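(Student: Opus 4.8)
The plan is to read the corollary off Lemma~\ref{nestbound}: the statement is word for word that of Lemma~\ref{nestbound} with the smaller quantity $\mathfrak{no}(\mathcal{F})$ replaced by $\mathfrak{fr}(\mathcal{F})$, and since $\mathfrak{no}(\mathcal{F})\le\mathfrak{fr}(\mathcal{F})$ by Corollary~\ref{nolessfr}, the estimate of Lemma~\ref{nestbound} is the sharper one. Thus all that is needed is to transfer the sharp $\mathfrak{no}$-bound to the coarser $\mathfrak{fr}$-bound, correcting for the fact that the two inequalities concern different cardinality slices $[X]^{\mathfrak{no}(\mathcal{F})+k}$ and $[X]^{\mathfrak{fr}(\mathcal{F})+k}$.

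Write $n=\mathfrak{no}(\mathcal{F})$, $N=\mathfrak{fr}(\mathcal{F})$ and $d=N-n\ge 0$ (non-negativity is Corollary~\ref{nolessfr}). Fix an integer $k\le |X|-N$. The key move is a reindexing: the slice in question satisfies $[X]^{N+k}=[X]^{n+(k+d)}$, and since $k+d\le(|X|-N)+(N-n)=|X|-n$, the parameter $k+d$ is admissible in Lemma~\ref{nestbound}. Applying that lemma with $k+d$ in place of $k$ gives
$$\left|\mathcal{F}\cap [X]^{N+k}\right|=\left|\mathcal{F}\cap [X]^{n+(k+d)}\right|\le \binom{n}{|X|-k-d}.$$

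It then remains to verify the purely numerical inequality
$$\binom{n}{|X|-k-d}\le\binom{N}{|X|-k}.$$
Setting $a=|X|-k$ and reading $\binom{p}{q}$ as the number of $p$-element subsets of a $q$-element set (the count appearing in the proof of Lemma~\ref{nestbound}), this amounts to $\binom{a-d}{n}\le\binom{a}{n+d}$, where $a\ge N=n+d$ because $k\le|X|-N$. I would prove it by induction on $d$, based on the single-step monotonicity $\binom{s}{t}\ge\binom{s-1}{t-1}$, which is immediate from Pascal's rule $\binom{s}{t}=\binom{s-1}{t-1}+\binom{s-1}{t}\ge\binom{s-1}{t-1}$. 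Applying this $d$ times, starting from $\binom{a}{n+d}$ and lowering both indices by one at each step, yields $\binom{a}{n+d}\ge\binom{a-d}{n}$, which is exactly what is needed. Combining this with the displayed bound completes the proof.

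There is no serious obstacle here; the only points requiring care are the range check $k+d\le|X|-n$ that legitimizes the use of Lemma~\ref{nestbound}, and the direction of the binomial monotonicity — that is, making sure the coarser bound $\binom{N}{|X|-k}$ really dominates the sharp $\mathfrak{no}$-bound rather than the other way around.
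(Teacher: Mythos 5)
Your proposal is correct and follows exactly the paper's route: write $\mathfrak{fr}(\mathcal{F})=\mathfrak{no}(\mathcal{F})+d$, apply Lemma~\ref{nestbound} to the slice reindexed as $[X]^{\mathfrak{no}(\mathcal{F})+(k+d)}$, and then dominate the resulting binomial coefficient by $\binom{\mathfrak{fr}(\mathcal{F})}{|X|-k}$. The only difference is that you actually justify the final binomial inequality (via $d$ applications of Pascal's rule) and the admissibility check $k+d\le|X|-\mathfrak{no}(\mathcal{F})$, both of which the paper leaves implicit.
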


\begin{proof}
Write $\mathfrak{fr}(\mathcal{F}) = \mathfrak{no}(\mathcal{F}) + i$ with $i\geq 0$. By Lemma~\ref{nestbound},
$$\left|\mathcal{F} \cap [X]^{\mathfrak{fr}(\mathcal{F})+k}\right| = \left|\mathcal{F} \cap [X]^{\mathfrak{no}(\mathcal{F})+i+k}\right|$$ $$\leq \binom{\mathfrak{no}(\mathcal{F})}{|X|-(k+i)} = \binom{\mathfrak{fr}(\mathcal{F})-i}{|X|-k-i} \leq \binom{\mathfrak{fr}(\mathcal{F})}{|X|-k} .$$
\end{proof}

%

\begin{lema}\label{norestriction}
For every subset $B\subset X$ we have $\mathfrak{no}(\mathcal{F}|_B) \leq  \mathfrak{no}(\mathcal{F})$.
\end{lema}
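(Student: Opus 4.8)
The plan is to mimic the proof of Lemma~\ref{restrictions}, replacing the set-mapping $S$ by the purely combinatorial data of a nested-orders family. Let $n = \mathfrak{no}(\mathcal{F})$ and fix a nested-orders family $\mathfrak{S}$ of degree $n$ on $X$ witnessing $\mathcal{F} \subset \mathcal{F}_{\mathfrak{S},n}$. I would simply restrict $\mathfrak{S}$ to $B$, setting
$$\mathfrak{S}_B = \{(t_1,\ldots,t_k) \in \mathfrak{S} : t_1,\ldots,t_k \in B\},$$
the family of those sequences of $\mathfrak{S}$ all of whose entries lie in $B$, and claim that $\mathfrak{S}_B$ is a nested-orders family of degree $n$ on $B$ with $\mathcal{F}|_B \subset \mathcal{F}_{\mathfrak{S}_B,n}$.

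First I would check that $\mathfrak{S}_B$ satisfies the conditions of Definition~\ref{defnestn}, i.e. (1)--(4) together with (5) for $k<n$. Each of these is inherited from $\mathfrak{S}$: conditions (1)--(3) are immediate, since the sequences they produce are subsequences of given ones and hence still have all entries in $B$; and conditions (4) and (5) are the only ones that assert the existence of a new sequence, but in both cases that new sequence is assembled from entries $(t_1,\ldots,t_k,s,t,u)$ already appearing in the hypotheses, so it again lies entirely in $B$ and therefore belongs to $\mathfrak{S}_B$. Thus the verification is routine precisely because every closure rule of a nested-orders family only rearranges or drops elements, never introducing a new one. Membership in $\mathfrak{S}_B$ of sequences of length at most $n+2$ is also automatic, so the degree is preserved.

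Second, for the inclusion $\mathcal{F}|_B \subset \mathcal{F}_{\mathfrak{S}_B,n}$, take $A \in \mathcal{F}$ and a sequence $(t_1,\ldots,t_{n+1},t_{n+2}) \in \mathfrak{S}_B$ with $t_1,\ldots,t_{n+1} \in A \cap B$. Since $\mathfrak{S}_B \subset \mathfrak{S}$ and $A \in \mathcal{F} \subset \mathcal{F}_{\mathfrak{S},n}$, the defining implication of $\mathcal{F}_{\mathfrak{S},n}$ gives $t_{n+2} \in A$; and $t_{n+2} \in B$ because the whole sequence lies in $\mathfrak{S}_B$. Hence $t_{n+2} \in A \cap B$, so $A \cap B \in \mathcal{F}_{\mathfrak{S}_B,n}$. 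As every element of $\mathcal{F}|_B$ is of the form $A\cap B$, this proves $\mathcal{F}|_B \subset \mathcal{F}_{\mathfrak{S}_B,n}$ and therefore $\mathfrak{no}(\mathcal{F}|_B) \leq n = \mathfrak{no}(\mathcal{F})$.

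There is no serious obstacle here: the argument is entirely formal. The single point that deserves care is the observation above that passing to the sub-family of sequences contained in $B$ is compatible with all of the closure axioms; this is exactly what makes the restriction operation well behaved on nested-orders families, and it stands in contrast with the analogous statement for $\mathfrak{fr}$ (Lemma~\ref{restrictions}), which required the normalization of $S$ provided by Lemma~\ref{specialS}.
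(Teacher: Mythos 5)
Your proof is correct and is essentially identical to the paper's: the paper also defines $\mathfrak{S}^B=\{(t_1,\ldots,t_k)\in\mathfrak{S}: t_1,\ldots,t_k\in B\}$ and observes that it is a nested-orders family on $B$ with $\mathcal{F}|_B\subset\mathcal{F}_{\mathfrak{S}^B,n}$. You have merely written out the routine verifications that the paper leaves as ``easy to check.''
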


\begin{proof}
Suppose that $\mathcal{F}\subset \mathcal{F}_{\mathfrak{S},n}$, and define $\mathfrak{S}^B =  \{(t_1,\ldots,t_k) \in \mathfrak{S} : t_1,\ldots,t_k\in B\}$. It is easy to check that $\mathfrak{S}^B$ is a nested-orders family on $B$ and $\mathcal{F}|_B \subset \mathcal{F}_{\mathfrak{S}^B,n}$
\end{proof}

\begin{lema}\label{recursive}
The following recursive formula for the computation of $\mathfrak{no}$ holds: $$\mathfrak{no}(\mathcal{F}) = 1 + \max_{A\subset X} \ \min_{a\in A} \ \mathfrak{no}\{B\cap A \setminus\{a\} : a\in B\in \mathcal{F}\}$$

\end{lema}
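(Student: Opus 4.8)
The plan is to prove the two inequalities separately, both resting on a single structural observation: how the class $\mathcal{F}_{\mathfrak{S},n}$ decomposes when one peels off the first coordinate of the sequences in a nested-orders family. Write $\mathcal{F}_{A,a}=\{(B\cap A)\setminus\{a\}:a\in B\in\mathcal{F}\}$ for the family occurring on the right-hand side. Given a nested-orders family $\mathfrak{S}$ on $X$ with top order $\prec_\emptyset$, for each $t\in X$ put $X_t=\{s:s\prec_\emptyset t\}$ and let $\mathfrak{S}^{(t)}=\{(t_2,\dots,t_k):(t,t_2,\dots,t_k)\in\mathfrak{S}\}$; by Definition~\ref{defnest} this is a nested-orders family on $X_t$. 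The first thing I would establish is the reduction
$$A\in\mathcal{F}_{\mathfrak{S},n}\iff A\cap X_t\in\mathcal{F}_{\mathfrak{S}^{(t)},\,n-1}\text{ for every }t\in A,$$
which is immediate from the definition of $\mathcal{F}_{\mathfrak{S},n}$ once one notes that a sequence $(t_1,\dots,t_{n+2})\in\mathfrak{S}$ with $t_1,\dots,t_{n+1}\in A$ is the same datum as $t_1=t\in A$ together with a sequence $(t_2,\dots,t_{n+2})\in\mathfrak{S}^{(t)}$ whose first $n$ entries lie in $A\cap X_t$ (all of $t_2,\dots,t_{n+2}$ automatically lying in $X_t$). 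Here the degree drops by one, so the recursion will bottom out at degree $-1$; I would fix the convention that $\mathcal{F}_{\mathfrak{S},-1}=\{\bigcup\mathfrak{S}\}$, equivalently $\mathfrak{no}(\mathcal{G})=-1$ exactly when $\mathcal{G}$ is empty or a single set, which is precisely what the base of the recursion requires (and what reconciles the formula with the fact that linearly ordered families have $\mathfrak{no}=0$).

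For the inequality $1+\max_A\min_a\mathfrak{no}(\mathcal{F}_{A,a})\le\mathfrak{no}(\mathcal{F})$, I would set $n=\mathfrak{no}(\mathcal{F})$ and fix a degree-$n$ nested-orders family $\mathfrak{S}$ on $X$ with $\mathcal{F}\subset\mathcal{F}_{\mathfrak{S},n}$. Fix any $A\subset X$ and let $a$ be the $\prec_\emptyset$-maximum of $A$, so that $A\cap X_a=A\setminus\{a\}$. Restricting $\mathfrak{S}$ to $A$ as in Lemma~\ref{norestriction} gives a degree-$n$ family $\mathfrak{S}^{A}$ on $A$ with $\mathcal{F}|_A\subset\mathcal{F}_{\mathfrak{S}^{A},n}$ whose top order is $\prec_\emptyset$ restricted to $A$, so $a$ is its maximal coordinate. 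Applying the reduction to $\mathfrak{S}^{A}$ at $a$, every $B\in\mathcal{F}$ with $a\in B$ satisfies $(B\cap A)\setminus\{a\}\in\mathcal{F}_{(\mathfrak{S}^{A})^{(a)},\,n-1}$; hence $\mathcal{F}_{A,a}\subset\mathcal{F}_{(\mathfrak{S}^{A})^{(a)},\,n-1}$ and $\mathfrak{no}(\mathcal{F}_{A,a})\le n-1$. Since $A$ was arbitrary this gives $\min_{a\in A}\mathfrak{no}(\mathcal{F}_{A,a})\le n-1$ for every $A$, which is the claimed bound.

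For the reverse inequality $\mathfrak{no}(\mathcal{F})\le 1+N$ with $N=\max_A\min_a\mathfrak{no}(\mathcal{F}_{A,a})$, I would build the required family by choosing the top order greedily. By definition of $N$, every nonempty $A\subset X$ admits some $a\in A$ with $\mathfrak{no}(\mathcal{F}_{A,a})\le N$; starting from $A_1=X$ and repeatedly selecting such an $a_i\in A_i$ and setting $A_{i+1}=A_i\setminus\{a_i\}$ produces an ordering $a_1\succ a_2\succ\cdots\succ a_m$ of $X$ with $\mathfrak{no}(\mathcal{F}_{A_i,a_i})\le N$ for all $i$, where $A_i=X\setminus\{a_1,\dots,a_{i-1}\}$. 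For each $i$ I would choose a degree-$N$ nested-orders family $\mathfrak{S}_i$ on $A_{i+1}=A_i\setminus\{a_i\}$ witnessing $\mathcal{F}_{A_i,a_i}\subset\mathcal{F}_{\mathfrak{S}_i,N}$, and assemble $\mathfrak{S}$ by declaring $a_1\succ\cdots\succ a_m$ to be the top order and grafting $\mathfrak{S}_i$ below $a_i$, so that $\mathfrak{S}^{(a_i)}=\mathfrak{S}_i$ and $X_{a_i}=A_{i+1}$. As explained in the comments preceding Lemma~\ref{nestedversions}, specifying a linear order at every node yields a genuine nested-orders family, here of degree $N+1$ (and one may pass to the $n$-restricted form of Definition~\ref{defnestn} via Lemma~\ref{nestedversions}). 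Finally, the reduction shows $\mathcal{F}\subset\mathcal{F}_{\mathfrak{S},N+1}$: for $B\in\mathcal{F}$ and $t=a_i\in B$ one has $B\cap X_{a_i}=(B\cap A_i)\setminus\{a_i\}\in\mathcal{F}_{A_i,a_i}\subset\mathcal{F}_{\mathfrak{S}_i,N}=\mathcal{F}_{\mathfrak{S}^{(a_i)},N}$, and every $t\in B$ is some $a_i$.

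I expect the main obstacle to be the bookkeeping in the reduction together with the verification that the grafted family $\mathfrak{S}$ genuinely satisfies all the axioms of Definition~\ref{defnest}: the point that makes the greedy assembly legitimate is that the subtrees below distinct top-level elements are mutually independent, so each $\mathfrak{S}_i$ can be chosen freely on its own initial segment $A_{i+1}$. The only other delicate point is the degree-$(-1)$ base case; it must be set up so that families consisting of a single set contribute $-1$, and once that convention is in place both the greedy termination and the maximal-coordinate argument close the induction.
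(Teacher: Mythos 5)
Your argument is correct and follows essentially the same route as the paper: the $[\geq]$ inequality by restricting to $A$, taking the $\prec_\emptyset$-maximum $a$ and peeling off the first coordinate (your ``reduction'' is exactly the paper's observation that $B\setminus\{a\}\in\mathcal{F}_{\mathfrak{S}',n-1}$ for $\mathfrak{S}'=\{(t_1,\ldots,t_p):(a,t_1,\ldots,t_p)\in\mathfrak{S}\}$), and the $[\leq]$ inequality by the same greedy enumeration $a_1,a_2,\ldots$ with the witnessing families grafted below each $a_i$. The only point where you go beyond the paper is in flagging the degree-$(-1)$ base convention, which the paper leaves implicit; just note that for the grafting to close at that level the witness $\mathfrak{S}_i$ must live on all of $A_{i+1}$, so the clean convention is $\mathcal{F}_{\mathfrak{S},-1}=\{\text{ground set}\}$ rather than ``any single set has $\mathfrak{no}=-1$.''
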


\begin{proof}
By Lemma~\ref{norestriction}, for the inequality $[\geq]$, it is enough to check the general inequality
$$\mathfrak{no}(\mathcal{F}) >  \min_{a\in X} \ \mathfrak{no}\{B\setminus\{a\} : a\in B\in \mathcal{F}\}.$$
So we suppose that $\mathcal{F} \subset \mathcal{F}_{\mathfrak{S},n}$ and we find $a\in X$ such that $\mathfrak{no}\{B\setminus\{a\} : a\in B\in\mathcal{F} \} \leq n-1$. Pick $a$ the maximum of $X$ in the order $\prec_\emptyset$ given by $t\prec_\emptyset s$ iff $(s,t)\in \mathfrak{S}$. If $a\in B \in \mathcal{F}_{\mathfrak{S},n}$, then $B\setminus\{a\}\in \mathcal{F}_{\mathfrak{S}',n-1}$, where
$$\mathfrak{S}' = \{(t_1,\ldots,t_p) : (a,t_1,\ldots,t_p)\in\mathfrak{S}\} $$
is a nested-orders family.
For the other inequality $[\leq]$ we call $n$ to the right-hand side of the equation. We need to define a suitable nested-orders family $\mathfrak{S}$ such that $\mathcal{F}\subset \mathcal{F}_{\mathfrak{S},n}$. By Lemma~\ref{nestedversions}, for every $A\subset X$ we can find $a[A]$ and a nested-orders family $\mathfrak{S}[A]$ such that $$\{B\cap A \setminus\{a[A]\} : a[A]\in B\in \mathcal{F}\} \subset \mathcal{F}_{\mathfrak{S}[A],n-1}.$$
Define $a_1 = a[X]$, $a_2 = a[X\setminus \{a_1\}]$, $a_3 = a[X\setminus\{a_1,a_2\}]$, etc, so that $X = \{a_1,a_2,\ldots,a_m\}$. Let $A_k = \{a_k,a_{k+1},\ldots,a_m\}$, so that $a_k = a[A_k]$.
We define $\mathfrak{S}$ to be the family of all finite sequences of the form $(a_k,t_2,\ldots,t_p)$ such that $t_2,\ldots,t_p\in A_k\setminus \{a_k\}$ and $(t_2,\ldots,t_p)\in \mathfrak{S}[A_k]$. This is easily checked to be a nested-orders family on $X$. Now, we pick $B\in \mathcal{F}$ and we check that $B\in \mathcal{F}_{\mathfrak{S},n}$. So suppose that $(a_k,t_2,\ldots,t_{n+2})\in \mathfrak{S}$ and $a_k,t_2,\ldots,t_{n+1}\in B$. Then $a[A_k] = a_k \in B\in \mathcal{F}$, so $B\cap A_k \setminus \{a_k\}\in \mathcal{F}_{\mathfrak{S}[A_k],n-1}$. Since $(t_2,\ldots,t_{n+1})\in B\cap A_k \setminus\{a_k\}$, we conclude that $t_{n+2}\in B\cap A_k\setminus\{a_k\}$, and in particular $t_{n+2}\in B$. This shows that $B\in \mathcal{F}_{\mathfrak{S},n}$.
\end{proof}

\begin{lema}\label{no0}
$\mathfrak{no}(\mathcal{F}) = 0$ if and only if $\mathfrak{fr}(\mathcal{F}) = 0$ if and only if $\mathcal{F}$ is linearly ordered by inclusion.
\end{lema}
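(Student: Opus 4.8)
The plan is to reduce everything to the already-established equivalence of Lemma~\ref{case0}, that $\mathfrak{fr}(\mathcal{F}) = 0$ holds if and only if $\mathcal{F}$ is linearly ordered by inclusion, and to identify structurally what the hypothesis $\mathfrak{no}(\mathcal{F}) = 0$ means. First I would record the two easy implications of the cycle. Since $\mathfrak{no}(\mathcal{F})$ is a nonnegative integer and $\mathfrak{no}(\mathcal{F}) \leq \mathfrak{fr}(\mathcal{F})$ by Corollary~\ref{nolessfr}, the assumption $\mathfrak{fr}(\mathcal{F}) = 0$ forces $\mathfrak{no}(\mathcal{F}) = 0$. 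Combined with Lemma~\ref{case0}, this already yields $\mathcal{F}$ linearly ordered $\Rightarrow \mathfrak{fr}(\mathcal{F}) = 0 \Rightarrow \mathfrak{no}(\mathcal{F}) = 0$.

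The only substantive step is the remaining implication $\mathfrak{no}(\mathcal{F}) = 0 \Rightarrow \mathcal{F}$ linearly ordered by inclusion, which then closes the cycle through Lemma~\ref{case0}. Here I would argue as follows. Assuming $\mathfrak{no}(\mathcal{F}) = 0$, there is a nested-orders family $\mathfrak{S}$ with $\mathcal{F} \subset \mathcal{F}_{\mathfrak{S},0}$, and by Lemma~\ref{nestedversions} I may take $\mathfrak{S}$ to be a full nested-orders family. Define the relation $t \prec s$ to mean $(s,t) \in \mathfrak{S}$; conditions (2) and (4) of Definition~\ref{defnest} make $\prec$ irreflexive and transitive, while condition (5) applied at $k=0$ (legitimate because $(t) \in \mathfrak{S}$ for every $t$ by condition (1)) makes it total. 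Thus $\prec$ is a strict linear order on $X$. Unwinding $\mathcal{F}_{\mathfrak{S},0} = \{A \subset X : (t_1,t_2)\in\mathfrak{S},\ t_1 \in A \Rightarrow t_2 \in A\}$, a set $A$ lies in $\mathcal{F}_{\mathfrak{S},0}$ precisely when it is $\prec$-downward closed. Since the downward closed subsets of a linearly ordered set are exactly its initial segments, they are themselves linearly ordered by inclusion; hence $\mathcal{F}_{\mathfrak{S},0}$ is a chain under $\subset$, and so is its subfamily $\mathcal{F}$.

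The one point requiring care, and the main (if small) obstacle, is confirming that $\prec$ is \emph{total}: a bare degree-$0$ family does not have totality of $\prec$ among its axioms, so the argument must pass to a full nested-orders family via Lemma~\ref{nestedversions}, where condition (5) is available at the bottom level. Once totality is secured, the identification of $\mathcal{F}_{\mathfrak{S},0}$ with the family of $\prec$-initial segments and the observation that initial segments form a chain are direct unwindings of the definitions, completing the three-way equivalence.
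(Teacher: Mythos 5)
Your proposal is correct and follows essentially the same route as the paper: the cycle is closed via Corollary~\ref{nolessfr} and Lemma~\ref{case0}, with the only substantive step being the observation that $\mathcal{F}_{\mathfrak{S},0}$ consists of the initial segments of $\prec_\emptyset$ and is therefore a chain under inclusion. Your extra care in invoking Lemma~\ref{nestedversions} to secure totality of $\prec_\emptyset$ (since a bare $0$-nested-orders family does not impose condition (5) at level $k=0$) is a legitimate detail that the paper's one-line proof leaves implicit.
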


\begin{proof}
Observe that a family of the form $\mathcal{F}_{\mathfrak{S},0}$ consists of the initial segments of the order $\prec_\emptyset$, so it is linearly ordered by inclusion. Using Lemma~\ref{case0} and Corollary~ \ref{nolessfr}, we are done.
\end{proof}

Thus, Lemma~\ref{recursive} describes the index $\mathfrak{no}$ in a recursive way starting from the families linearly ordered by inclusion which are those of index 0. Using this, we easily compute the value of $\mathfrak{no}$ for the family $[X]^{<\omega}$ of all subsets of $X$:

\begin{prop}\label{noall}
$\mathfrak{no}([X]^{<\omega}) = |X|-1$.
\end{prop}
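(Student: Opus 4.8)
The plan is to read the value off the recursive formula of Lemma~\ref{recursive}, proceeding by induction on $m := |X|$ to prove $\mathfrak{no}([X]^{<\omega}) = m-1$. The base case $m=1$ is handled outside the recursion: here $[X]^{<\omega} = \{\emptyset, X\}$ is linearly ordered by inclusion, so Lemma~\ref{no0} gives $\mathfrak{no} = 0 = m-1$ at once. This is the right place to bottom out, since the recursion is genuinely informative only once the family fails to be linearly ordered, that is, once $\mathfrak{no}\geq 1$.

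The key step is to identify the auxiliary families appearing on the right-hand side. Fix a nonempty $A \subseteq X$ and some $a \in A$. Because $\mathcal{F} = [X]^{<\omega}$ contains \emph{every} subset of $X$, for an arbitrary $C \subseteq A\setminus\{a\}$ the set $B := C\cup\{a\}$ satisfies $a\in B\in\mathcal{F}$ and $B\cap A\setminus\{a\} = C$; conversely each such trace is a subset of $A\setminus\{a\}$. Hence
\[ \{B\cap A\setminus\{a\} : a\in B\in\mathcal{F}\} = [A\setminus\{a\}]^{<\omega}, \]
the full power set of a set of size $|A|-1$. For $|A|\geq 2$ the induction hypothesis (applicable since $1\leq |A\setminus\{a\}| = |A|-1 < m$) gives $\mathfrak{no}([A\setminus\{a\}]^{<\omega}) = |A|-2$, a value independent of $a$; for $|A|=1$ the trace family is $\{\emptyset\}$, of index $0$.

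It remains to evaluate the outer max-min. By the above the inner minimum over $a\in A$ equals $\mathfrak{no}([A\setminus\{a\}]^{<\omega}) = \max(|A|-2,0)$, which is nondecreasing in $|A|$ and so is maximized at $A = X$, giving $\max(m-2,0) = m-2$ since $m\geq 2$. Plugging into Lemma~\ref{recursive} yields $\mathfrak{no}([X]^{<\omega}) = 1 + (m-2) = m-1$, closing the induction. As an independent check of the inequality $\mathfrak{no}\leq m-1$, observe that no nested-orders sequence can have length $m+1 = (m-1)+2$, since its entries would be $m+1$ distinct points of an $m$-element set; so for any $(m-1)$-nested-orders family $\mathfrak{S}$ the defining implication of $\mathcal{F}_{\mathfrak{S},\,m-1}$ is vacuous, whence $\mathcal{F}_{\mathfrak{S},\,m-1} = [X]^{<\omega}$.

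I expect the only real care to be needed at the bottom of the recursion: the singleton sets $A$, and the one-point sets $A\setminus\{a\}$ arising when $|A|=2$, contribute linearly ordered families of index $0$, and one must confirm that these degenerate terms never beat the term coming from $A=X$ — which holds precisely because $m\geq 2$ forces $m-2\geq 0$. Treating $m=1$ separately via Lemma~\ref{no0}, rather than forcing it through the recursion, is what keeps the argument honest.
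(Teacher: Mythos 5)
Your proof is correct and follows exactly the route the paper intends: the paper offers no written proof of Proposition~\ref{noall} beyond the remark that it follows ``easily'' from Lemma~\ref{recursive} together with the index-$0$ characterization of Lemma~\ref{no0}, and your induction supplies precisely those omitted details (the identification of the trace families as full power sets, the evaluation of the max--min, and the base case). The extra vacuity check for the upper bound is a harmless bonus consistent with the paper's framework.
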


As a corollary of Lemma~\ref{recursive}, and Lemma~\ref{no0}, Problem~\ref{problemno} is equivalent to Problem 3 in the introduction (indeed, by Corollary~\ref{nolessfr}, equivalent to asking for the $[\leq]$ inequality).

\section{Intersections of initial segments}\label{secseg}

\begin{teo}\label{constantorders}
Suppose that we have $n+1$ many linear order relations $<_1,<_2,\ldots,<_{n+1}$ on a finite set $X$. Then the family
$$\mathcal{F} = \left\{A\subset X : \exists s_1,\ldots,s_{n+1}\in X : A = \{t\in X : t\leq_1 s_1, \ldots, t \leq_{n+1} s_{n+1}\}\right\}$$ satisfies that $\mathfrak{fr}(\mathcal{F}) \leq n$.
\end{teo}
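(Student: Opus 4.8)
The plan is to reduce to a special function $S$ via Lemma~\ref{specialS} and then argue by induction on $n$, the key point being a reduction that peels off one order at the cost of a single extra monotonicity requirement. Write $\mathcal{F}_n$ for the family of intersections of initial segments of the first $n$ orders $<_1,\dots,<_n$, so that a typical member of $\mathcal{F}$ is $A=A_n\cap\{t:t\le_{n+1}s_{n+1}\}$ with $A_n=\{t:t\le_i s_i,\ i\le n\}\in\mathcal{F}_n$. The first thing I would prove is the following clean implication: if $u:X\to\aleph_n$ is increasing for $<_{n+1}$ (that is, $s<_{n+1}t$ implies $u(s)<u(t)$ as ordinals) and satisfies $Su(A_n)\cap u(X)\subset u(A_n)$ for every $A_n\in\mathcal{F}_n$, then $u$ already witnesses freeness for all of $\mathcal{F}$. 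Indeed, take $t\notin A$. If $t>_{n+1}s_{n+1}$ then every element of $A$ is $<_{n+1}t$, so $\max u(A)<u(t)$ and the special property of $S$ forces $u(t)\notin Su(A)$; if instead $t\le_{n+1}s_{n+1}$ but $t\notin A_n$, then monotonicity of $S$ and freeness for $\mathcal{F}_n$ give $u(t)\notin Su(A_n)\supset Su(A)$. This step is routine and is the part I am most confident about.

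With this reduction in hand the induction runs as follows. For $n=0$ there is a single order, $\mathcal{F}$ consists of its initial segments, which are linearly ordered by inclusion, so $\mathfrak{fr}(\mathcal{F})=0$ by Lemma~\ref{case0}. For the inductive step, the statement applied to the $n$ orders $<_1,\dots,<_n$ (fewer orders, one cardinal lower) gives $\mathfrak{fr}(\mathcal{F}_n)\le n-1$, i.e. $\mathcal{F}_n$-free embeddings into $\aleph_{n-1}$ exist for every special function. By the previous paragraph it then suffices to produce, for a given special $S$ on $\aleph_n$, an embedding $u:X\to\aleph_n$ that is simultaneously $<_{n+1}$-increasing and $\mathcal{F}_n$-free.

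The heart of the matter, and what I expect to be the main obstacle, is exactly this lift from $\aleph_{n-1}$ to $\aleph_n$ with the extra monotonicity. The tension is that prescribing $<_{n+1}$-monotonicity pins down the relative ordinal order of the images, so one cannot also make $u$ monotone in any of $<_1,\dots,<_n$ (which would make the $\mathcal{F}_n$-constraints free for nothing via the special clause); the constraints of $\mathcal{F}_n$ must instead be dodged actively, and this is precisely where the extra cardinal is spent. I would carry the lift out with a Fodor pressing-down argument organized around the $(n+1)$-level tree structure on $\aleph_n$ built in the proof of Theorem~\ref{reorderings}: split $X$ into $<_{n+1}$-convex blocks, send them to an increasing sequence of fibers each a copy of $\aleph_{n-1}$, apply the induction hypothesis inside each fiber to realize $<_1,\dots,<_n$, and use the finiteness of the values of $S$ together with the clause ``$\alpha\notin S(B)$ when $\max B<\alpha$'' to discharge the cross-fiber constraints by placing each block above the finite $S$-images of the lower ones, pressing down to a stationary set so that these finitely many values become uniform. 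The delicate feature is that a member of $\mathcal{F}_n$ need not respect the block decomposition, so some of its ``outside'' points may sit below part of the set and cannot be dodged by height alone; controlling exactly these points is the crux, and the base instance $n=1$ of this lift is essentially Theorem~\ref{AviMar}.
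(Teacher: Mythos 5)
Your opening reduction is sound: for a function $S$ as in Lemma~\ref{specialS}, an embedding $u$ that is $<_{n+1}$-increasing into the ordinals and free for $\mathcal{F}_n$ (the intersections of initial segments of $<_1,\dots,<_n$) is automatically free for all of $\mathcal{F}$, and the base case $n=0$ is correct. But the proof is incomplete exactly where you say the heart of the matter lies. The inductive step needs an embedding $u:X\to\aleph_n$ that is simultaneously $<_{n+1}$-increasing and $\mathcal{F}_n$-free for the given $S$ on $\aleph_n$; the induction hypothesis only yields $\mathcal{F}_n$-free embeddings into $\aleph_{n-1}$ for functions defined there, with no control over where the images sit relative to a prescribed external order. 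The block-and-fiber scheme you sketch to bridge this does not close the gap: if the blocks are singletons the induction hypothesis contributes nothing, and if they are larger, a set $A\in\mathcal{F}_n$ can meet several blocks while an excluded point $t\notin A$ lies in a block below part of $A$, so that $u(t)<\max u(A)$ and neither the special clause on $S$ nor ``placing blocks above the $S$-images of lower ones'' rules out $u(t)\in Su(A)$. You flag precisely this case as the crux and leave it unresolved, so the argument as written does not prove the theorem.

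The paper's proof sidesteps the whole issue by not privileging any one order and not using Lemma~\ref{specialS} at all. It builds, by induction on a level index $i=0,1,\dots,n+1$, a nested tree of sets $A_{ix}\subset\aleph_n$ (one for each $x\in X$ at each level) with $|A_{ix}|=\aleph_{n-i}$, where at level $i$ the sets are chosen one at a time in $<_i$-increasing order of $x$, each taken inside its predecessor from level $i-1$ and disjoint from the union of the $S$-images of the already-chosen level-$i$ sets; this is possible purely by cardinality, since that union has size at most $\aleph_{n-i}<\aleph_{n-i+1}=|A_{i-1,x}|$. The singletons at level $n+1$ define $u$, and property $S(A_{ix})\cap A_{iy}=\emptyset$ for $x<_iy$ gives freeness for every order $<_i$ at once: if $t\notin A$ then $t>_is_i$ for some $i$, so every $x\in A$ satisfies $x<_it$ and $u(t)\notin Su(A)$. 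Each order is paid for by one drop in cardinality, which is the mechanism your lift is missing; if you want to salvage your induction, the missing ingredient is essentially this one-level version of the tree construction rather than a Fodor or convex-block argument.
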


\begin{proof}
Let $S:[\aleph_n]^{<\omega}\To [\aleph_n]^{<\omega}$ be a function. We shall define sets $A_{ix}\subset \aleph_n$ for $i\in \{0,1,\ldots,n+1\}$ and $x\in X$ with the following properties:
\begin{enumerate}
\item $A_{0x} = \aleph_n$ for all $x$,
\item $A_{1x}\cap A_{1x} = \emptyset$ if $x\neq y$,
\item $A_{ix} \supset A_{jy}$ if $i<j$,
\item $|A_{ix}| = \aleph_{n-i}$ for $i=1,\ldots,n$ and $A_{n+1,x}$ is a singleton $\{u(x)\}$ for all $x\in X$,
\item $S(A_{ix})\cap A_{iy} = \emptyset$ if $x <_i y$.
\end{enumerate}
These sets are constructed by induction on $i$. Once $i$ is fixed, if $X = \{x_1 <_i x_2 <_i \cdots <_i x_m\}$ one easily constructs each $A_{i x_k}$ by induction on $k$, one just takes $A_{i x_k}\subset A_{i-1,x_k}$ such that $A_{i x_k} \cap \bigcup_{p<k} S(A_{i,x_p}) = \emptyset$, which is possible since $|A_{i-1,x_k}| = \aleph_{n-i+1} > |\bigcup_{p<k} S(A_{i,x_p})|$. The set $Y = u(X)$ and the bijection $u:X\To Y$ are the ones that we are looking for, because (5) guarantees that if $A\subset X$ and $x <_i y$ for all $x\in A$, then $u(y)\not\in S(u(A))$.
\end{proof}

Notice that Theorem~\ref{AviMar} is a corollary of Theorem~\ref{constantorders} obtained by taking $<_1$ the natural order of $\{1,\ldots,m\}$ and $<_2$ the reverse order. We can also obtain Theorem~\ref{classical}:

\begin{prop}\label{classicalformula}
$\mathfrak{fr}([X]^{<\omega}) = |X|-1$.
\end{prop}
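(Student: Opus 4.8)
The goal is to prove $\mathfrak{fr}([X]^{<\omega}) = |X|-1$, where $X$ is a finite set with $m = |X|$ elements. This is a two-sided equality, so the plan is to establish the upper bound $\mathfrak{fr}([X]^{<\omega}) \leq m-1$ and the lower bound $\mathfrak{fr}([X]^{<\omega}) \geq m-1$ separately. The upper bound should be the easier direction, and I expect it to follow from Theorem~\ref{constantorders}: the family of \emph{all} subsets of $X$ can be realized as intersections of initial segments of $m$ suitably chosen linear orders. Concretely, for each $x\in X$ pick a linear order $<_x$ in which $x$ is the maximum element; then the singleton-complement $\{t : t <_x x\} = X\setminus\{x\}$ and, more usefully, any subset $A\subsetneq X$ is the intersection over the elements $x\notin A$ of the initial segments that exclude exactly $x$. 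With $m$ linear orders one generates every proper subset as such an intersection, so $\mathcal{F} = [X]^{<\omega}$ (up to adding $X$ itself, which is irrelevant by Lemma~\ref{singletons}) is covered by Theorem~\ref{constantorders} with $n = m-1$, giving $\mathfrak{fr}([X]^{<\omega}) \leq m-1$. I would want to verify the exact count of how many orders are genuinely needed and confirm it is at most $m$, i.e.\ $n+1 = m$.

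For the lower bound $\mathfrak{fr}([X]^{<\omega}) \geq m-1$, the cleanest route is to invoke the already-established machinery for the nested-orders index $\mathfrak{no}$. By Corollary~\ref{nolessfr} we have $\mathfrak{no}(\mathcal{F}) \leq \mathfrak{fr}(\mathcal{F})$ for every $\mathcal{F}$, and by Proposition~\ref{noall} we have $\mathfrak{no}([X]^{<\omega}) = |X| - 1$. Chaining these together immediately yields
$$m - 1 = \mathfrak{no}([X]^{<\omega}) \leq \mathfrak{fr}([X]^{<\omega}),$$
which is exactly the lower bound. Thus the lower bound is essentially free, delegated entirely to the nested-orders computation carried out earlier in Section~\ref{secnes}.

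Combining the two inequalities gives $\mathfrak{fr}([X]^{<\omega}) = m - 1 = |X|-1$, completing the proof. The main obstacle, and the only place requiring genuine care, is the upper-bound step: I must exhibit an explicit family of $m$ linear orders on $X$ whose initial-segment intersections generate all proper subsets of $X$. The natural construction is to take, for each $x \in X$, an order placing $x$ last; then for a proper subset $A$, intersecting the initial segments $\{t : t <_x x\}$ over all $x \in X \setminus A$ carves out precisely $A$. I would double-check that this indeed uses only $m$ orders (one per element) and that the edge cases $A = \emptyset$ and $A = X$ are handled — the former as an intersection and the latter by Lemma~\ref{singletons}, since the total set is always irrelevant. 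Once that combinatorial identity is pinned down, Theorem~\ref{constantorders} supplies the bound mechanically.
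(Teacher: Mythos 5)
Your proposal is correct and follows essentially the same route as the paper: the upper bound via Theorem~\ref{constantorders} applied to $|X|$ linear orders each placing a different element last (so every subset is an intersection of the corresponding initial segments), and the lower bound by combining Proposition~\ref{noall} with Corollary~\ref{nolessfr}. The small edge cases you flag ($A=\emptyset$, $A=X$, the exact count of orders) all check out and the paper handles them the same way.
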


\begin{proof}
We consider linear orders $<_1,\ldots,<_{n+1}$ on a set $X=\{s_1,\ldots,s_{n+1}\}$ of cardinality $n+1$ such that the maximum of $<_k$ is $s_k$. In fact, any subset $A\subset X$ can be expressed as intersections of initial segments of the form $\{x : x\leq s_k\}$ if $s_k\in A$ or $\{x : x<_k s_k\}$ if $s_k\not\in A$. This proves the inequality $[\leq]$. The other inequality follows from \ref{noall} and \ref{nolessfr}.
\end{proof}

\section{Families of subsets of $\{1,2,3,4\}$}

\begin{lema}\label{proper}
If $\mathcal{F}$ is a family of subsets of $X$ which does not contain all subsets of $X$ of cardinality $|X|-1$, then $\mathfrak{fr}(\mathcal{F})\leq |X|-2$.
\end{lema}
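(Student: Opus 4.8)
The plan is to reduce the statement to Theorem~\ref{constantorders} by exhibiting $\mathcal{F}$ as contained in a family of the special ``intersections of initial segments'' type, but using only $|X|-1$ linear orders rather than the $|X|$ orders needed in Proposition~\ref{classicalformula}. Write $m = |X|$ and fix, using the hypothesis, an element $x_0\in X$ with $X\setminus\{x_0\}\notin\mathcal{F}$; enumerate $X = \{x_0,x_1,\ldots,x_{m-1}\}$. I will carry out the argument for $m\geq 3$ and treat the small cases separately at the end.

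The key construction is a list of $m-1$ linear orders $<_1,\ldots,<_{m-1}$ designed so that $x_0$ is ``hard to delete in isolation''. Concretely, I let $<_j$ be any linear order whose largest element is $x_j$, whose second largest element is $x_0$, and which is arbitrary below that. The point of placing $x_0$ in second position everywhere is that any initial segment $\{t : t\leq_j s\}$ of $<_j$ which omits $x_0$ must also omit the maximum $x_j$; hence one can never remove $x_0$ alone, whereas removing $x_0$ together with one $x_j$ costs only a single order. Applying Theorem~\ref{constantorders} to these $m-1 = (m-2)+1$ orders produces the family $\mathcal{F}'$ of all intersections $\bigcap_{j=1}^{m-1}\{t : t\leq_j s_j\}$, with $\mathfrak{fr}(\mathcal{F}')\leq m-2$.

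Next I would verify that $\mathcal{F}\subseteq\mathcal{F}'$. If $x_0\in A$, take $I_j = X$ when $x_j\in A$ and $I_j = \{t : t\leq_j x_0\} = X\setminus\{x_j\}$ when $x_j\notin A$; the intersection deletes exactly $\{x_j : x_j\notin A\} = X\setminus A$, giving $A$. If $x_0\notin A$, then since $A\neq X\setminus\{x_0\}$ (because $X\setminus\{x_0\}\notin\mathcal{F}$) and $A\subseteq X\setminus\{x_0\}$, there is some $x_{j_0}\notin A$ with $j_0\geq 1$; for that order I take $I_{j_0} = X\setminus\{x_{j_0},x_0\}$, the initial segment below the third largest element of $<_{j_0}$, and for the remaining orders I proceed as in the first case, so that the total intersection deletes $\{x_0\}\cup\{x_j : x_j\notin A\} = X\setminus A$, again giving $A$. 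In both cases $A\in\mathcal{F}'$, so $\mathcal{F}\subseteq\mathcal{F}'$, and monotonicity of $\mathfrak{fr}$ (the trivial instance of Lemma~\ref{intersections}) combined with Theorem~\ref{constantorders} yields $\mathfrak{fr}(\mathcal{F})\leq\mathfrak{fr}(\mathcal{F}')\leq m-2$.

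The step I expect to require the most care is the case $x_0\notin A$: it is exactly here that the hypothesis ``$\mathcal{F}$ does not contain all $(m-1)$-subsets'' is used, to guarantee a second deletable element $x_{j_0}$, and it is here that the ``second largest'' design of the orders pays off, letting a single order delete both $x_{j_0}$ and $x_0$. I would also check that this scheme reaches $A=\emptyset$ (it does for $m\geq 3$, since deleting each $x_j$ through its own order, together with $x_0$ through one of them, removes all of $X$). Finally, the degenerate cases are handled by hand: for $m=2$ the hypothesis forces $\mathcal{F}$ to be linearly ordered by inclusion, whence $\mathfrak{fr}(\mathcal{F})=0=m-2$ by Lemma~\ref{case0}, while $m\leq 1$ falls outside the intended scope.
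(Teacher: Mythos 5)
Your proposal is correct and is essentially the paper's own proof: the paper likewise fixes the element $x_0$ with $X\setminus\{x_0\}\notin\mathcal{F}$, builds $|X|-1$ linear orders each having a distinct other element as maximum and $x_0$ as second-largest, verifies that every member of $\mathcal{F}$ is an intersection of initial segments of these orders (using the missing $(|X|-1)$-set exactly where you do), and concludes via Theorem~\ref{constantorders}. The only cosmetic difference is that the paper deletes $x_0$ through all orders indexed outside $A$ rather than through a single chosen one.
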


\begin{proof}
Say that $X=\{1,\ldots,n\}$ and $\{1,\ldots,n-1\}\not\in
\mathcal{F}$. For every $k\in\{1,\ldots,n-1\}$ consider a linear
order $<_k$ on $X$ whose last two elements are $n <_k k$. All
subsets of $X$ except $\{1,\ldots,n-1\}$ can be written as
intersections of initial segments of these orders, so we can apply
Lemma~\ref{constantorders}. In fact, if $n\in A\subseteq X$, then
$$A = \bigcap_{i\in A\setminus\{n\}}\{x\in X : x\leq_ i i\} \cap \bigcap_{i\in \{1,\ldots,n-1\}\setminus A} \{x\in X : x \leq_i n\}$$
while, if $n\not \in A$, $A\neq \{1,\ldots,n-1\}$, then
$$A = \bigcap_{i\in A}\{x\in X : x\leq_ i i\} \cap \bigcap_{i\in \{1,\ldots,n-1\}\setminus A} \{x\in X : x <_i n\}$$
\end{proof}

If $\mathcal{F}$ is a family of subsets of $X$, we say that $\mathcal{F}$ contains a cycle if there exists a set $A = \{x_1,\ldots,x_k\}\subset [X]$ of cardinality at least 3 such that
$$\{x_1,x_2\}, \{x_2,x_3\},\ldots, \{x_{k-1},x_k\},\{x_k,x_1\} \in \mathcal{F}|_A.$$

\begin{lema}\label{cycle}
If $\mathcal{F}$ contains a cycle, then $\mathfrak{no}(\mathcal{F})>1$. Hence, also $\mathfrak{fr}(\mathcal{F})>1$.
\end{lema}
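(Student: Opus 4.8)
The plan is to prove the stronger statement $\mathfrak{no}(\mathcal{F})>1$; the inequality $\mathfrak{fr}(\mathcal{F})>1$ then follows at once from Corollary~\ref{nolessfr}, since $\mathfrak{fr}(\mathcal{F})\geq\mathfrak{no}(\mathcal{F})$. First I would reduce the problem to the cycle itself. If $A=\{x_1,\ldots,x_k\}$ is the set witnessing the cycle, then the sets $\{x_1,x_2\},\ldots,\{x_{k-1},x_k\},\{x_k,x_1\}$ all belong to $\mathcal{F}|_A$, and by Lemma~\ref{norestriction} we have $\mathfrak{no}(\mathcal{F}|_A)\leq\mathfrak{no}(\mathcal{F})$. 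Hence it suffices to derive a contradiction from the assumption $\mathfrak{no}(\mathcal{F}|_A)\leq 1$, which provides a nested-orders family $\mathfrak{S}$ on $A$ (we may take a full one, by Lemma~\ref{nestedversions}) with $\mathcal{F}|_A\subset\mathcal{F}_{\mathfrak{S},1}$; in particular every cycle edge lies in $\mathcal{F}_{\mathfrak{S},1}$.

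The heart of the argument is a characterization of which two-element sets belong to $\mathcal{F}_{\mathfrak{S},1}$. Write $\prec_\emptyset$ for the linear order on $A$ given by $t\prec_\emptyset s$ iff $(s,t)\in\mathfrak{S}$, and for each $x$ let $\prec_{(x)}$ be the linear order on the initial segment $\{t:t\prec_\emptyset x\}$ determined by $\mathfrak{S}$, so that $(x,y,z)\in\mathfrak{S}$ precisely when $y\prec_\emptyset x$ and $z\prec_{(x)}y$. Fix an edge $\{x,y\}$ with $y\prec_\emptyset x$. Since the entries of any sequence in $\mathfrak{S}$ are pairwise distinct (condition (2) of Definition~\ref{defnest}) and any length-$3$ sequence has its length-$2$ prefix in $\mathfrak{S}$ (condition (3)), the only way a triple $(t_1,t_2,t_3)\in\mathfrak{S}$ can satisfy $\{t_1,t_2\}=\{x,y\}$ is $t_1=x$, $t_2=y$, with $t_3\notin\{x,y\}$. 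Consequently $\{x,y\}\in\mathcal{F}_{\mathfrak{S},1}$ if and only if no such triple exists, that is, if and only if there is no $z$ with $z\prec_{(x)}y$, i.e. $y$ is the $\prec_{(x)}$-minimum of the initial segment below $x$.

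With this in hand the contradiction is immediate. Let $M$ be the $\prec_\emptyset$-largest element of $A$. In the cycle $M$ has exactly two neighbours $u$ and $v$, and $u\neq v$ because $k\geq 3$. Both $\{M,u\}$ and $\{M,v\}$ are cycle edges lying in $\mathcal{F}_{\mathfrak{S},1}$, and since $M$ is $\prec_\emptyset$-maximal it is the larger endpoint of each. By the characterization above, both $u$ and $v$ must equal the $\prec_{(M)}$-minimum of $\{t:t\prec_\emptyset M\}$; as this minimum is unique, $u=v$, contradicting $u\neq v$. This shows $\mathfrak{no}(\mathcal{F}|_A)>1$, whence $\mathfrak{no}(\mathcal{F})>1$ and $\mathfrak{fr}(\mathcal{F})>1$.

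The step I expect to require the most care is the edge characterization in the second paragraph: one must be precise that conditions (2) and (3) of Definition~\ref{defnest} force any offending triple to have the shape $(x,y,z)$ (and not, say, $(y,x,z)$), so that membership of a two-element set in $\mathcal{F}_{\mathfrak{S},1}$ is exactly a minimality statement about $\prec_{(x)}$. Once this is pinned down, the use of the global $\prec_\emptyset$-maximum together with the hypothesis $k\geq 3$ to produce two distinct elements that are forced to coincide is routine.
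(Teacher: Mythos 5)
Your proof is correct, but it takes a different route from the paper's. The paper disposes of this lemma in one line by invoking the recursive formula of Lemma~\ref{recursive} together with Lemma~\ref{no0}: for \emph{every} $a=x_i$ in the cycle $A$, the derived family $\{B\cap A\setminus\{a\}: a\in B\in\mathcal{F}\}$ contains the two incomparable singletons $\{x_{i-1}\}$ and $\{x_{i+1}\}$, so it is never linearly ordered by inclusion, whence $\min_{a\in A}\mathfrak{no}(\cdots)\geq 1$ and $\mathfrak{no}(\mathcal{F})\geq 2$. You instead unwind Definition~\ref{defnest} directly: after restricting to $A$ via Lemma~\ref{norestriction} and upgrading to a full nested-orders family via Lemma~\ref{nestedversions}, you characterize the doubletons of $\mathcal{F}_{\mathfrak{S},1}$ as the pairs $\{x,y\}$ with $y$ the $\prec_{(x)}$-minimum below $x$, and then play the two cycle-neighbours of the $\prec_\emptyset$-maximum against the uniqueness of that minimum. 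This is in effect an inlined re-proof, for this special case, of the $[\geq]$ half of Lemma~\ref{recursive} (whose proof in the paper likewise picks the $\prec_\emptyset$-maximum). Your argument is sound and self-contained --- the edge characterization is carried out carefully, and you correctly note that condition (5) at level $k=1$ (hence the passage to a full nested-orders family) is needed for $\prec_{(M)}$ to be total --- but it is longer than necessary given that Lemma~\ref{recursive} is already available; what it buys is independence from that lemma and a concrete picture of why a cycle obstructs degree~$1$.
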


\begin{proof}
It follows from Lemma~\ref{nestbound}, because we can find no $a\in A$ such that $\mathfrak{no}\{B\cap A\setminus\{a\} : a\in B\in \mathcal{F}\} = 0$, that is, such that $\{B\cap A\setminus\{a\} : a\in B\in \mathcal{F}\}$ is linearly ordered by inclusion.
\end{proof}

\begin{teo}
If $\mathcal{F}$ is a family of subsets of $X = \{1,2,3,4\}$, then $\mathfrak{fr}(\mathcal{F}) = \mathfrak{no}(\mathcal{F})$.
More precisely, supposing that $\mathcal{F}$ is closed under intersections,
\begin{enumerate}
\item $\mathfrak{fr}(\mathcal{F})= \mathfrak{no}(\mathcal{F} ) = 0$ if and only if $\mathcal{F}$ is linearly ordered by inclusion.
\item $\mathfrak{fr}(\mathcal{F}) = \mathfrak{no}(\mathcal{F} ) = 3$ if and only if $\mathcal{F} =[X]^{<\omega}$.
\item $\mathfrak{fr}(\mathcal{F}) = \mathfrak{no}(\mathcal{F} ) = 2$ if and only if $\mathcal{F} \neq [X]^{<\omega}$ and $\mathcal{F}$ contains a cycle.
\item $\mathfrak{fr}(\mathcal{F}) = \mathfrak{no}(\mathcal{F} ) = 1$ in the remaining cases.
\end{enumerate}
\end{teo}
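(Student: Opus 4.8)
The plan is to establish the single equality $\mathfrak{fr}(\mathcal{F})=\mathfrak{no}(\mathcal{F})$ by pinning down both indices in each of four mutually exclusive and exhaustive regimes and checking they agree, thereby also proving the refined statement. First I would reduce to the case that $\mathcal{F}$ is closed under intersections and that $\bigcup\mathcal{F}=X$. The reduction to intersection-closed families is legitimate for \emph{both} indices: for $\mathfrak{fr}$ this is Lemma~\ref{intersections} (closing under intersections leaves the value unchanged, since $\mathcal F$ and its closure have the same union and each is made of intersections of members of the other), while for $\mathfrak{no}$ it follows because each $\mathcal{F}_{\mathfrak{S},n}$ is itself closed under intersections (if $t_1,\dots,t_{n+1}\in A\cap B$ then $t_{n+2}$ is forced into both $A$ and $B$), so $\mathcal{F}\subset\mathcal{F}_{\mathfrak{S},n}$ already forces the intersection-closure of $\mathcal{F}$ into $\mathcal{F}_{\mathfrak{S},n}$; together with the evident monotonicity of $\mathfrak{no}$ this gives invariance. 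When $\bigcup\mathcal{F}\subsetneq X$ the family lives on at most three points and the same bounds below (the cited lemmas hold over arbitrary finite ground sets) yield the equality on the smaller set. Throughout I use Corollary~\ref{nolessfr}, $\mathfrak{no}(\mathcal{F})\le\mathfrak{fr}(\mathcal{F})$, so it suffices to sandwich the two indices between matching bounds.

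Next I would dispatch the three easy regimes. For (1), $\mathfrak{fr}=\mathfrak{no}=0$ precisely when $\mathcal{F}$ is a chain, which is Lemma~\ref{no0}. For (2), if $\mathcal{F}=[X]^{<\omega}$ then Propositions~\ref{classicalformula} and \ref{noall} give $\mathfrak{fr}=\mathfrak{no}=|X|-1=3$; conversely, by Lemma~\ref{singletons} I may assume $X\in\mathcal{F}$, and then a proper intersection-closed family must omit some three-element subset, for a family containing all four three-element subsets would contain (as pairwise intersections) all six pairs, hence all singletons and $\emptyset$, hence all of $[X]^{<\omega}$. Omitting a three-element set lets me invoke Lemma~\ref{proper} to conclude $\mathfrak{fr}\le|X|-2=2$. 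For (3), a cycle forces $\mathfrak{no}>1$ by Lemma~\ref{cycle}, hence $\mathfrak{no}\ge2$ and so $\mathfrak{fr}\ge2$; combined with the bound $\mathfrak{fr}\le2$ just obtained, this gives $\mathfrak{fr}=\mathfrak{no}=2$ for every proper $\mathcal{F}\ne[X]^{<\omega}$ that contains a cycle.

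The crux is regime (4): $\mathcal{F}$ is proper, contains no cycle, and is not a chain, and I must show $\mathfrak{fr}(\mathcal{F})\le1$ (the matching lower bound $\mathfrak{fr}\ge\mathfrak{no}\ge1$ is immediate from Lemma~\ref{no0}, as $\mathcal{F}$ is not a chain). Since there is no general inequality $\mathfrak{fr}\le\mathfrak{no}$ — this is exactly Problem~\ref{problemno} — I cannot merely bound $\mathfrak{no}$; I need a genuine witness for $\mathfrak{fr}\le1$, and the natural tool is Theorem~\ref{constantorders} with $n=1$. Concretely, I would show that every cycle-free intersection-closed proper family on the four-point set $X$ is realized by two linear orders $<_1,<_2$ on $X$, meaning each $A\in\mathcal{F}$ is an intersection of initial segments $\{t:t\le_1 s_1\}\cap\{t:t\le_2 s_2\}$; equivalently, after placing the four points in the plane with $<_1$ and $<_2$ as the two coordinate orders, each member of $\mathcal{F}$ is the set of points lying in a south-west rectangle. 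Once this coordinatization is produced, Theorem~\ref{constantorders} delivers $\mathfrak{fr}(\mathcal{F})\le1$ at once.

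Producing the two orders is where I expect the real work to lie, and it is the main obstacle. The plan is to reduce to the finitely many \emph{maximal} cycle-free intersection-closed proper families on $\{1,2,3,4\}$ (any subfamily inherits a representation by Lemma~\ref{intersections} and monotonicity), classify them up to the symmetries of $X$, and exhibit an explicit planar placement for each, verifying that cycle-freeness is the \emph{only} obstruction to such a placement on four points. A useful guide for the coordinates is the recursion of Lemma~\ref{recursive}: acyclicity guarantees, for every $A\subseteq X$, a point $a\in A$ whose removal leaves the trace family $\{B\cap A\setminus\{a\}:a\in B\in\mathcal{F}\}$ a chain, and peeling points off in this order suggests how to assemble the second order. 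The subtle point is that $\mathfrak{no}\le1$ only furnishes \emph{locally} varying secondary orders, whereas $\mathfrak{fr}\le1$ demands two \emph{globally} defined linear orders; bridging this gap is precisely the four-point instance of Problem~\ref{problemno}, and I expect it to require the hands-on finite verification rather than a soft argument.
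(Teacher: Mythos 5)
Your proposal is correct and follows essentially the same route as the paper's proof: regimes (1)--(3) are handled by the same citations (Lemmas~\ref{no0}, \ref{proper}, \ref{cycle}, \ref{singletons}, Propositions~\ref{classicalformula} and~\ref{noall}, Corollary~\ref{nolessfr}), and regime (4) by exhibiting two global linear orders and invoking Theorem~\ref{constantorders}. The only difference is that you leave the decisive finite verification as a plan, whereas the paper carries it out --- it bounds the number of tripletons by two, splits on the configuration of doubletons in $\mathcal{F}^\ast$, and writes down an explicit pair of orders (such as $1423/1324$, $1234/4321$, $3124/4213$) in each cycle-free case --- but your outline of that check is sound and it does go through.
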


\begin{proof}
Statement (2) follows from~\ref{no0}. The implication $[\Leftarrow]$ of (1) follows from~\ref{classicalformula} and~\ref{noall}. The converse follows from Lemma~\ref{proper}, since we are assuming (for simplicity, based on Lemma~\ref{intersections}) that $\mathcal{F}$ is closed under intersections, so if $\mathcal{F}\neq [X]^{<\omega}$ it must fail to contain a set of cardinality $3$ (we assume $X\in \mathcal{F}$ as $X$ is the result of intersecting an empty family). The implication $[\Leftarrow]$ of (3) follows from (2) and Lemma~\ref{cycle}. It only remains to show that if $\mathcal{F}\neq [X]^{<\omega}$, and $\mathcal{F}$ does not contain a cycle, then $\mathfrak{fr}(\mathcal{F}) \leq 1$. Let $\mathcal{F}^\ast$ be the elements of $\mathcal{F}$ of cardinality either 2 or 3. By Lemma~\ref{singletons}, $\mathfrak{fr}(\mathcal{F}) \leq \max(\mathfrak{fr}(\mathcal{F}^\ast),1)$.  If $\mathcal{F}$ contains three elements of cardinality 3, say $\{1,2,3\}$, $\{1,2,4\}$ and $\{1,3,4\}$, then $\mathcal{F}$ contains a cycle $\{2,3,4\}$.  So $\mathcal{F}$ contains at most two sets of cardinality 3. We distinguish several cases. In the first case, we suppose that all doubletons of $\mathcal{F}$ have a common element, say $1$. If there are two tripletons, their intersection is an element of $\mathcal{F}$, say $\{1,2\}$. So
$$\mathcal{F}^\ast \subset \{\{1,2\},\{1,3\},\{1,4\},\{1,2,3\},\{1,2,4\}\}.$$
Then, $\mathfrak{fr}(\mathcal{F}^\ast)\leq 1$ by application of Theorem~\ref{constantorders} to the orders $1423$ and $1324$.  If there are no two tripletons, the remaining possibility within the first case is that
$$\{2,3,4\}\in \mathcal{F}^\ast \subset \{\{1,2\},\{1,3\},\{1,4\},\{2,3,4\}\}.$$
But notice that if $\mathcal{F}$ has two of those doubletons, say $\{1,2\}$ and $\{1,3\}$, then we get a cycle $\{1,2,3\}$. So we may suppose $\mathcal{F}^\ast \subset \{\{1,2\},\{2,3,4\}\}$. Then $\mathfrak{fr}(\mathcal{F}^\ast)=0$ since it is linearly ordered by inclusion.
If the first case does not hold, and we cannot form a cycle with the doubletons, then we can suppose that the doubletons of $\mathcal{F}$ are contained in $\{\{1,2\},\{2,3\},\{3,4\}\}$. The second case is that all these three doubletons belong to $\mathcal{F}$. If we add $\{1,2,4\}$ or $\{1, 3,4\}$ we would get cycles $\{2,3,4\}$ or $\{1,2,3\}$ respectively. So
$$\mathcal{F}^\ast \subset \{\{1,2\},\{2,3\},\{3,4\},\{1,2,3\},\{2,3,4\}\},$$
and we can apply Theorem~\ref{constantorders} for the orders 1234 and 4321. The third case is that we get at most two doubletons. Since we already excluded case 1, those two doubletons are $\{1,2\}$ and $\{3,4\}$. Then, we can suppose that
$$\mathcal{F}^\ast \subset \{\{1,2\},\{3,4\},\{1,2,3\},\{1,2,4\}\}.$$
The two tripletons and $\{3,4\}$ induce a cycle $\{1,3,4\}$. So either
$$\mathcal{F}^\ast \subset \{\{1,2\},\{1,2,3\},\{1,2,4\}\}.$$ in which case we can use Theorem~\ref{constantorders} for the orders 3124 and 4213, or there is only one tripleton, so we can suppose that
$$\mathcal{F}^\ast \subset \{\{1,2\},\{3,4\},\{1,2,3\}\},$$ and then we can use the orders 1234 and 4321.
 \end{proof}

\section{Familes $\mathcal{F}[\prec]$}

If $\mathcal{F}$ is as in Theorem~\ref{constantorders},  according to Theorem~\ref{reorderings} there must exist an $n$-nestd orders  family $\mathfrak{S}$ such that $\mathcal{F} \subset \mathcal{F}_{\mathfrak{S},n}$ It is the following:
$$\mathfrak{S} = \{(t_1,\ldots,t_p) : p\leq n+2 \text{ and } t_i <_k t_k \text{ whenever }  k<i,\ k\leq n+1\}.$$ In fact, $\mathcal{F} \subset \mathcal{F}_{\mathfrak{S},n}$ because if $A = \{t\in X : t\leq_1 s_1, \ldots, t \leq_{n+1} s_{n+1}\}$ and $(t_1,\ldots,t_{n+2})\in \mathfrak{S}$ and $t_1,\ldots,t_{n+1}\in A$, then we have $t_{n+2} <_k t_{k} \leq_k s_k$, and therefore $t_{n+2}\leq_k s_k$ for all $k=1,\ldots,n+1$, and so $t_{n+2}\in A$.

We do not know if $\mathfrak{fr}(\mathcal{F}_{\mathfrak{S},n}) \leq n$ for such $\mathfrak{S}$. In what follows, we are going to discuss the simplest case when $n=1$. For this, we introduce the following notation: If $X = \{1,\ldots,m\}$ and $\prec$ is a linear order on $\{1,\ldots,m-1\}$, then the above construction for the orders $<_1 = <$ (the usual order) and $<_2 = \prec$ gives:
$$\mathfrak{S}[\prec] = \{(t_1,\ldots,t_p) : p\leq 3, t_1>t_2, t_1>t_3, t_2\succ t_1\},$$
$$\mathcal{F}[\prec] = \mathcal{F}_{\mathfrak{S}[\prec],1}.$$

For example, if $\prec$ coincides with the usual order $<$ of $\{1,\ldots,m-1\}$, then
 $$\mathcal{F}[<] = \{A\subset \{1,\ldots,m\}  : A=\{1,2,\ldots,k\}\cup \{p\} \text{ for some }0\leq k\leq p\leq m\}.$$

If $m>4$, we cannot use Theorem~\ref{constantorders} to show that $\mathfrak{fr}(\mathcal{F}[<]) \leq 1$. For suppose that there exist two linear orders $<_1$ and $<_2$ on $\{1,\ldots,m\}$ such that every element of $\mathcal{F}[<]$ is the intersection of an initial segment of $<_1$ and an initial segment of $<_2$. Since $\{1,\ldots,m-1\}$ and $\{1,\ldots,m-2,m\}$ belong to $\mathcal{F}[<]$ they must be initial segments of each of the orders. So we can suppose that $m$ is the $<_1$-maximum and $m-1$ is the $<_2$-maximum. But then, each set of the form $\{1,\ldots,k\}\cup \{m\}\in \mathcal{F}[<]$ must be an initial segment of $<_2$, and each set of the form $\{1,\ldots,k\} \cup \{m-1\}$ must be an initial segment of $<_1$. We conclude that
$$m \  <_2 \  1 \ <_2 \  2 \ <_2 \ 3 \ <_2 \ \cdots \ <_2 \ m-1,$$ $$m-1 \  <_1 \  1 \ <_1 \  2 \ <_1 \ 3 \ <_1 \ \cdots \ <_1 \ m-2 \ <_1 \ m.$$
But then $\{1,3\}\in \mathcal{F}[<]$ is not the intersection of two initial segments.\\

\begin{teo}\label{onemin}
Let $\prec$ be an order on the set $\{1,\ldots,m-1\}$. Suppose that there exists $t\in \{1,\ldots,m-1\}$ such that
$$\begin{array}{l}
t \prec t+1\prec t+2\prec t+3\prec\cdots \prec m-1,\\
 t\prec t-1 \prec t-2 \prec t-3 \prec \cdots \prec 1.
 \end{array}$$
 Then $\mathfrak{fr}(\mathcal{F}[\prec])\leq 1$.
\end{teo}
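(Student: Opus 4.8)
The plan is to fix an arbitrary $S\colon[\aleph_1]^{<\omega}\To[\aleph_1]^{<\omega}$ and, using Lemma~\ref{specialS}, assume $S$ is monotone, satisfies the look-up property ($\alpha\notin S(A)$ when $\max A<\alpha$) and kills singletons. First I would pin down what $\mathcal{F}[\prec]$ looks like under the hypothesis. A nonempty member is a set $\{M\}\cup I$, where $I$ is a $\prec$-initial segment of $\{1,\dots,M-1\}$; since the initial segments of a linear order are nested and the hypothesis forces $t$ to be the $\prec$-minimum with two monotone arms, each such $I$ is in fact a usual-order interval $[t-b,t+a]$ containing $t$ (a top interval $[M-c,M-1]$ when $M\le t$). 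Equivalently, peeling the top point $m$ gives $\mathcal{F}[\prec]=\mathcal{F}[\prec']\cup\mathcal{C}_m$, where $\prec'$ is the restriction of $\prec$ and the sets $\{m\}\cup I$ containing $m$ form a \emph{chain} under inclusion. For $A=\{M\}\cup I$ and an outside point $x\notin A$ I would split into three cases: $(\alpha)$ $x>M=\max A$; $(\beta)$ $x$ lies below the interval $I$; $(\gamma)$ $x$ lies in the gap between $I$ and $M$.

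Cases $(\alpha)$ and $(\beta)$ are the ``interval part''. Laying the images out so that larger top points receive larger ordinals makes $(\alpha)$ automatic by look-up; $(\beta)$, together with all freeness conditions coming from the lower intervals $[p,q]$ with $q\le t$, is exactly the two-sided interval phenomenon settled in Theorem~\ref{AviMar}/Theorem~\ref{constantorders} by intersecting an initial segment of the usual order with one of the reverse order, and so is handled by the countable-set/two-order mechanism in the proof of Theorem~\ref{constantorders}. The real content is case $(\gamma)$, which (taking the largest excluding set in each chain) is the system $u(z_j)\notin S(\{u(z_1),\dots,u(z_{j-1}),u(M)\})$ along the $\prec$-increasing enumeration $z_1=t,z_2,\dots$ of $\{1,\dots,M-1\}$. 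Here the excluded point $u(z_j)$ sits \emph{below} the top $u(M)$ of the set, so neither look-up nor either order frees it.

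To solve $(\gamma)$ I would prove a counting lemma: for monotone look-up $S$, a finite $P$, and an uncountable $W\subseteq\aleph_1$, there is $y\in W$ for which $\{\beta\in W:\beta>y,\ y\notin S(P\cup\{\beta\})\}$ is still uncountable. Indeed, if every $y\in W$ were ``caught'' by co-countably many larger $\beta$, then for each $k$ a finite $F\subseteq W$ of size $k$ would be contained in $S(P\cup\{\beta\})$ for co-countably many $\beta$, and intersecting these co-countable sets over all $k$ would produce a single $\beta$ with $S(P\cup\{\beta\})$ infinite, which is impossible. With this lemma the chain system is solved by a reservoir recursion: peel the elements in $\prec$-order, choosing $u(z_1)=u(t)$ inside $W$ so that uncountably many larger ordinals fail to capture it, pass to the shrunken reservoir and to the mapping $A\mapsto S(\{u(t)\}\cup A)$ (still monotone, still look-up above $u(t)$, though no longer killing singletons), and recurse; the point $m$ is inserted last into the surviving reservoir. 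This is the analogue for $\mathfrak{fr}$ of the fact that a chain has index $0$ (Lemma~\ref{case0}).

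The hard part will be running the two mechanisms \emph{simultaneously} inside a single $\aleph_1$-construction: the interval part $(\alpha)$–$(\beta)$ wants the images laid out along the usual order, whereas the reservoir recursion for $(\gamma)$ wants to peel along $\prec$, and the finitely many ``cross'' conditions (a below-interval point $x$ excluded from a set whose interval carries several upper elements) must be maintained as extra avoidance requirements at every peeling step. The reason this coupling cannot be dodged is structural: it is genuinely two-sided, it cannot be absorbed into a finite modification $\hat S(A)=S(A)\cup S(A\cup\{u(M)\})$ because that destroys the look-up property, and one cannot simply pre-commit the top by passing to an uncountable free set, since in $\aleph_1$ free sets have size only $2$ (Theorem~\ref{classical}). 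The counting/reservoir argument, which uses only the finiteness of the values of $S$ and no order condition, is precisely the new ingredient that should make the combination go through; checking that the V-shape of $\prec$ lets one choose a peeling order compatible with the interval layout is where the hypothesis $t\prec t\pm1\prec t\pm2\prec\cdots$ is used.
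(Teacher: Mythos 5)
Your structural analysis of $\mathcal{F}[\prec]$ is correct (every nonempty member is $\{M\}\cup I$ with $I$ a $\prec$-initial segment of $\{1,\ldots,M-1\}$, which under the V-shape hypothesis is an interval around $t$, or a top interval of $\{1,\ldots,M-1\}$ when $M\le t$), and your counting lemma is sound and is a legitimate lighter-weight substitute for what the paper extracts from the $\Delta$-system lemma: since for any set $A\in\mathcal{F}[\prec]$ excluding $s$ with $s<\max A$ one has $A\setminus\{\max A\}\subset\{r:r\prec s\}$, at most one element of each relevant set is still unplaced at any stage of a $\prec$-induction, so guarding against a single future top per condition really is all that is needed. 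The overall strategy --- peel along $\prec$ starting at $t$, shrink an uncountable reservoir at each step --- is also the paper's.

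But the proof is not complete, and you say so yourself: the whole difficulty is concentrated in the sentence ``the hard part will be running the two mechanisms simultaneously,'' and that part is never carried out. The missing idea is the paper's device for the left arm. Before the induction starts, one fixes countable sets $A_1<A_2<\cdots<A_{t+1}$ sitting entirely below the reservoir, and insists that $u(s)$ be chosen inside $A_s$ for $s\le t$, while only the right-arm elements and $m$ are taken as minima of the shrinking reservoir. At the stage treating $s$, the $\Delta$-system (or your counting argument) applied to $\{S(B_s\cup\{\alpha\}):\alpha\in Z\}$ splits each value into a \emph{finite} root $R_s$ and tails that can be pushed above $\sup A_{t+1}$; hence a left-arm point $u(s)$ can be picked inside its preassigned low block avoiding only the finite accumulated roots, and is automatically disjoint from every tail, present or future. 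This is exactly what reconciles ``images increasing in the usual order'' (needed for your case $(\alpha)$ via look-up) with ``peeling in $\prec$-order'' (needed for the reservoir recursion), and nothing in your proposal plays this role. Relatedly, your case $(\beta)$ is mischaracterized: when the top $M$ of $A=\{M\}\cup I$ is placed \emph{after} the excluded point $s<\min I$, the condition $u(s)\notin S(u(I)\cup\{u(M)\})$ involves a future element and again requires the root/tail splitting, not just the two-order interval mechanism of Theorem~\ref{constantorders}. So the skeleton and the counting lemma are right, but the step that actually makes the theorem true is missing.
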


\begin{proof}
Let $S:[\omega_1]^{<\omega}\To [\omega_1]^{<\omega}$ be a function as in Lemma~\ref{specialS}.  For two subsets $A,B\subset \omega_1$ we write $A<B$ if $\alpha<\beta$ for all $\alpha\in A$ an $\beta\in B$. First we fix countable subsets $A_1,A_2,\ldots, A_t, A_{t+1} \subset \omega_1$ such that $A_1 < A_2 < \cdots < A_{t+1}$.
We are going to define $u:\{1,\ldots,m\}\To \omega_1$ inductively with respect to the order $\prec$, defining $u(t)$ in the first place, and $u(m)$ in the last place (for convenience, we declare that $s\prec m$ for all $s<m$). Along with the construction we will also define, for each $s\in\{1,\ldots,m\}$ two subsets $F_s,Z_s \subset \omega_1$ such that:
\begin{itemize}
\item $F_s$ is finite and $Z_s$ has cardinality $\omega_1$,
\item $F_r \subset F_s$ and $Z_r \supset Z_s$ whenever $r\prec s$,
\item $A_{t+1} < Z_s$ for all $s$,
\item $u(s) \in A_s\setminus F_s$ if $s\leq t$ and $u(s) = \min(Z_s)$ if $s>t$,
\item $u(r) < u(s)$ if $r<s$ (this is a consequence of the previous).

\end{itemize}

Suppose that $u(r)$, $F_t$ and $Z_t$ are defined for all $r\prec s$, and we are going to define $u(s)$, $F_s$ and $Z_s$. Let $B_s= \{u(r) : r\prec s\}$, let $F_s^- = \bigcup_{r\prec s} F_r$ and let $Z_s^- = \bigcap_{r\prec s} Z_r$. In the initial step when $s=t$, we take $B_t=F_t^-=\emptyset$ and $Z_t^-=
\{\alpha<\omega_1 : \alpha> \sup(A_{t+1})\}.$ By the $\Delta$-system lemma, we can find a subset $Z' \subset Z_s^-$ of cardinality $\omega_1$ such that $$\{S(B_s\cup \{\alpha\}) : \alpha \in Z'\}$$ is a $\Delta$-system of root $R_s$. Notice that $R_s\supset S(B_s)$. Let $T^s_\alpha = S(B_s\cup\{\alpha\})\setminus R_s$ be the tails of the $\Delta$-system. We can find a further uncountable subset $Z_s\subset Z'$ such that
$$ (\star)\ \ R_s \cup \{\min(Z_s^-)\} < T^s_\alpha \cup \{\alpha\} < T^s_\beta\cup \{\beta\} \ \ \text{ for all }  \alpha,\beta\in Z_s \text{ with }\alpha<\beta.$$ Let $F_s = F_s^- \cup R_s$. As for the choice of $u(s)$, if $s>t$ then $u(s)$ is taken as the minimum of $Z_s$, and if $s\leq t$, then we pick $u(s)\in A_s \setminus F_s$. This finishes the definition of $u$. It remains to prove that this $u$ satisfies the requirement that $Su(A) \cap Y \subset u(A)$ when $A\in \mathcal{F}[\prec]$ and $Y = \{u(1),\ldots,u(m)\}$. So, suppose that we have $A\in \mathcal{F}[\prec]$ and that $s\not\in A$ and we prove that  $u(s)\not\in Su(A)$. We consider two critical elements of $A$, $c = \max(A)$ and $a = \min(A\setminus\{c\})$, the minimum and maximum refer to usual order $<$. If $A$ is a singleton, we take $a=c$. We distinguish several cases:\\

 Case 1 $c<s$. This implies that $u(s)\not\in Su(A)$ simply because we are assuming that $S$ is as in Lemma~\ref{specialS}.\\

 Case 2: $s<c$ and $c>t$. Since $A\in \mathcal{F}[\prec]$, we must have $A\setminus \{c\} \subset \{r : r\prec s\}.$  So if $B_s=\{u(r) : r\prec s\}$ as in the inductive definition of $u(s)$, then we would have $Su(A) \subset S(B_s\cup \{u(c)\})$. So it is enough to check that $u(s)\not\in S(B\cup\{u(c)\})$.

\begin{itemize}

\item  If $s>t$, then $u(s)\in Z_s$ and since $t<s<c$, $u(c)\in Z_c \subset Z_s$. By $(\star)$ above, for $\alpha=u(s)\in Z_s$ and $\beta=u(c)\in Z_s$, we have $\alpha\not\in R_s\cup T^s_\beta = S(B_s\cup \{\beta\})$, which is what we wanted to prove.

\item If $s\leq t$ and $s\prec c$, then $u(s) \in A_s < \min(Z_s^-)$, so using $(\star)$ again for $\beta = u(c)$, we obtain that $u(s)\not\in T^s_\beta\cup \{\beta\}$. Since $u(s) \in A_s\setminus F_s$, also $u(s)\not\in R_s$. Therefore $u(s)\not\in R_s \cup T^s_\beta = S(B_s\cup\{\beta\}) $.

\item If $s\leq t$ and $c\prec s$, then $B_s \cup \{u(c)\} = B_s$, and we know that $u(s)\not\in F_s \supset B_s$.\\

\end{itemize}

Case 3: $s<c\leq t$. Since $A\in\mathcal{F}[\prec]$ and $t\prec t-1 \prec\cdots$, this implies that $A = \{a,a+1,\ldots,c\}$ and $s<a$. Remember that $F_s\supset R_s \supset S(B_s)$. In this case $u(A)\subset B_s$, so $Su(A) \subset S(B_s) \subset F_s$. Since $u(s) \in A_s\setminus F_s$, we get that $u(s) \not\in Su(A)$.

\end{proof}

\begin{coro}
$$\mathfrak{fr}\left(\{A\subset \{1,\ldots,m\}  : A=\{1,2,\ldots,k\}\cup \{p\} \text{ for some }0\leq k\leq p\leq m\}\right) = 1.$$
\end{coro}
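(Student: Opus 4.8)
The plan is to recognize this family as $\mathcal{F}[<]$, the instance of $\mathcal{F}[\prec]$ in which $\prec$ is the usual order of $\{1,\ldots,m-1\}$, exactly as recorded in the displayed formula just before Theorem~\ref{onemin}. The inequality $\mathfrak{fr}(\mathcal{F}[<])\leq 1$ will then follow by a direct application of Theorem~\ref{onemin}, while the reverse inequality $\mathfrak{fr}(\mathcal{F}[<])\geq 1$ will follow from Lemma~\ref{case0}. So the whole corollary reduces to verifying that the hypothesis of Theorem~\ref{onemin} is met and that the family fails to be a chain under inclusion.

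For the upper bound I would check the hypothesis of Theorem~\ref{onemin} for $\prec\,=\,<$. The theorem asks for some $t\in\{1,\ldots,m-1\}$ with $t\prec t+1\prec\cdots\prec m-1$ and $t\prec t-1\prec\cdots\prec 1$. In the usual order the second chain would require $t\prec t-1$, which is impossible unless that chain is empty; this pins $t$ down to the $\prec$-minimal element, namely $t=1$. With $t=1$ the first chain is simply $1<2<\cdots<m-1$, which holds, and the second chain degenerates to the single element $1$ (there is nothing below $1$ in the order), so it is vacuously satisfied. Hence the hypothesis of Theorem~\ref{onemin} holds with $t=1$, and the theorem yields $\mathfrak{fr}(\mathcal{F}[<])\leq 1$.

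For the lower bound I would invoke Lemma~\ref{case0}, by which $\mathfrak{fr}(\mathcal{F})=0$ precisely when $\mathcal{F}$ is linearly ordered by inclusion. Assuming $m\geq 2$, the family $\mathcal{F}[<]$ contains the incomparable singletons $\{1\}$ (obtained with $k=0,\ p=1$) and $\{2\}$ (obtained with $k=0,\ p=2$), so it is not a chain under inclusion. Hence $\mathfrak{fr}(\mathcal{F}[<])\neq 0$, and together with the upper bound this forces $\mathfrak{fr}(\mathcal{F}[<])=1$.

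I do not expect a genuine obstacle: all the substance lives in Theorem~\ref{onemin}, and the corollary is just the observation that its hypothesis is satisfied by the usual order with the extremal choice $t=1$, together with the trivial remark that the family is not linearly ordered. The only point requiring a moment's care is noticing that the descending requirement $t\prec t-1\prec\cdots\prec 1$ is exactly what forces $t$ to be the $\prec$-minimum $1$, after which that condition becomes vacuous rather than contradictory.
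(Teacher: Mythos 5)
Your proof is correct and takes essentially the same route as the paper: identify the family as $\mathcal{F}[<]$ for the usual order on $\{1,\ldots,m-1\}$ and apply Theorem~\ref{onemin} with $t=1$. You additionally make explicit the lower bound via Lemma~\ref{case0} (the incomparable singletons $\{1\}$ and $\{2\}$), a step the paper leaves implicit but which is needed for the stated equality.
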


\begin{proof}
As observed before Theorem~\ref{onemin}, the above family coincides with $\mathcal{F}[<]$ when $<$ is the usual order of $\{1,\ldots,m-1\}$, which satisfies the hypothesis of Theorem~\ref{onemin} for $t=1$.
\end{proof}

Consider the family
$$ \mathcal{F} = \mathcal{F}[5\prec 3 \prec 2 \prec 4 \prec 1] =$$ $$\{\emptyset,\{1\},\{2\},\{3\},\{4\},\{5\},\{6\}, \{1,2\},\{2,3\},\{3,4\},\{3,5\},\{5,6\},$$ $$\{1,2,3\},\{2,3,5\},\{3,5,6\}, \{2,3,5,6\}, \{1,2,3,4,5,6\}\}.$$  This is a family that does not satisfy the hypothesis of Theorem~\ref{onemin}, and for which there are essential difficulties to follow any similar argument. So we do not know if $\mathfrak{fr}(\mathcal{F}) = 1$. This is Problem 2 in the introduction.


\begin{thebibliography}{1}
\bibitem{AntonioWitold2015} A. Avil\'{e}s and W. Marciszewski, {\em Extension operators on balls and on spaces of finite sets}, Studia Math. 227 (2015), no. 2, 165--182.
\bibitem{EnfRos} P. Enflo, H. P. Rosenthal, \emph{Some results concerning $L^p(\mu)$-spaces}, J. Funct. Anal. 14 (1973), 325--348.
\bibitem{ErdHaj} P. Erd\H{o}s, A. Hajnal, \emph{On the structure of set-mappings}, Acta Math. Acad. Sci. Hungar 9 (1958) 111--131.
\bibitem{HajMat} A. Hajnal, A. M\'{a}t\'{e}, \emph{Set mappings, partitions, and chromatic numbers}, Logic Colloquium '73 (Bristol, 1973), pp. 347--379. Studies in Logic and the Foundations of Mathematics, Vol. 80, North-Holland, Amsterdam, 1975.
\bibitem{KomShe} P. Komj\'{a}th, S. Shelah, \emph{Two consistency results on set mappings}, J. Symbolic Logic 65 (2000), no. 1, 333--338.
\bibitem{Kuratowski} K. Kuratowski, \emph{Sur une caract\'{e}risation des alephs}, Fund. Math. 38 (1951), 14--17.
\end{thebibliography}
\end{document}